\renewcommand{\phi}{\varphi}
\newcommand{\image}{\mathrm{im}\,}
\newcommand{\kernel}{\mathrm{ker}\,}
\newcommand{\Hom}{\mathrm{Hom}}
\newcommand{\dimension}{\mathrm{dim}\,}
\newcommand{\codimension}{\mathrm{codim}\,}
\newcommand{\supp}{\mathrm{supp}\,}
\newcommand{\Ac}{\mathcal{A}}
\newcommand{\Bc}{\mathcal{B}}
\newcommand{\Cc}{\mathcal{C}}
\newcommand{\Fc}{\mathcal{F}}
\newcommand{\Tc}{\mathcal{T}}
\newcommand{\Coh}{\mathrm{Coh}}
\newcommand{\arinj}{\ar@{^{(}->}}
\newcommand{\arsurj}{\ar@{->>}}
\newcommand{\areq}{\ar@{=}}
\newcommand{\wh}{\widehat}
\newcommand{\ch}{\mathrm{ch}}
\newcommand{\scalea}{\scalebox{0.5}}
\newcommand{\al}{{\alpha}}
\newcommand{\Wi}{W_{1,\Phi}}
\newcommand\reallywidehat[1]{%
\savestack{\tmpbox}{\stretchto{%
  \scaleto{%
    \scalerel*[\widthof{\ensuremath{#1}}]{\kern-.6pt\bigwedge\kern-.6pt}%
    {\rule[-\textheight/2]{1ex}{\textheight}}
  }{\textheight}%
}{0.5ex}}%
\stackon[1pt]{#1}{\tmpbox}%
}
\newcommand\makebig[2]{%
  \@xp\newcommand\@xp*\csname#1\endcsname{\bBigg@{#2}}%
  \@xp\newcommand\@xp*\csname#1l\endcsname{\@xp\mathopen\csname#1\endcsname}%
  \@xp\newcommand\@xp*\csname#1r\endcsname{\@xp\mathclose\csname#1\endcsname}%
}
\begin{document}

\title[A framework for torsion theory computations]{A framework for  torsion theory computations on elliptic threefolds}

\author[David Angeles]{David Angeles}
\address{Department of Statistics \\ The Ohio State University \\ 1958 Neil Ave \\
Columbus OH 43210 \\ USA}
\email{angeles.6@osu.edu}

\author[Jason Lo]{Jason Lo}
\address{Department of Mathematics \\California State University, Northridge\\18111 Nordhoff Street\\Northridge CA 91330 \\USA}
\email{jason.lo@csun.edu}

\author[Courtney van der Linden]{Courtney M van der Linden}
\address{Department of Mathematics \\California State University, Northridge\\18111 Nordhoff Street\\Northridge CA 91330 \\USA}
\email{courtney.vanderlinden.727@my.csun.edu}

\thanks{Partially supported by NSF-DMS 1247679 grant PUMP: Preparing Undergraduates through Mentoring towards PhD's}

\keywords{torsion pair, torsion theory, t-structure,  elliptic threefold}
\subjclass[2010]{Primary 14F05; Secondary: 18E40, 14J30}

\begin{abstract}
We give a list of statements on the geometry of elliptic threefolds phrased only in the language of topology and homological algebra.  Using only notions from topology and homological algebra, we recover existing results and prove new results on torsion pairs  in the category of coherent sheaves on an elliptic threefold.
\end{abstract}

\maketitle
\tableofcontents


\section{Introduction}

Given a smooth projective variety $X$, in order to study a moduli space of geometric objects on $X$, one often needs to fix a stability condition.  In particular, when the geometric objects are coherent sheaves or chain complexes of coherent sheaves, a stability condition (e.g.\ slope stability, tilt stability, or Bridgeland stability) is defined only after one fixes a t-structure on the derived category of coherent sheaves $D^b(\Coh (X))$ on $X$.  In other words, a t-structure is a precursor to a stability condition.  

Since every torsion class in the abelian category $\Coh (X)$ defines a t-structure on $D^b(\Coh (X))$, the study of torsion classes in $\Coh (X)$ and their relations can yield useful information on stability conditions on $X$, and ultimately moduli spaces of objects on $X$.  There are certain torsion classes in $\Coh (X)$, in fact, that can be studied with a minimal amount of knowledge of algebraic geometry.  Specifically, one can begin with a list of `axioms' in which algebro-geometric information is already encoded, and proceed to deduce consequences of these axioms using basic topology and homological algebra.  These consequences then correspond to statements in algebraic geometry.

In this paper, we provide such a list of axioms and prove a series of results using only notions from topology and homological algebra.  Some of these results were essential to the proof of the Harder-Narasimhan property of limit tilt stability on elliptic threefolds in \cite{Lo14,Lo15}; other results in this article are new.  All the results in this paper are results on t-structures on $D^b(\Coh (X))$ when $X$ is an elliptic threefold.


We now describe the results of this article. For a smooth projective threefold $X$ with a Weierstra{\ss} fibration, we  define subcategories $C_{ij}$ of $\Coh (X)$ (see Section \ref{sec:computations}). We set a category $C_{ij}$ to be empty if it does not appear in \eqref{eq:Cij3Dpic-1}: 

\begin{equation}\label{eq:Cij3Dpic-1}
\xymatrix{
& & C_{40} \ar@{-}[r] & C_{50} \ar@{-}[d] \\
& C_{20} \ar@{-}[ur] \ar@{-}[r] & C_{30} \ar@{-}[ur] \ar@{-}[d]& C_{51} \ar@{-}[d] \\
C_{00} \ar@{-}[r] \ar@{-}[ur] & C_{10} \ar@{-}[ur] \ar@{-}[d] & C_{31} \ar@{-}[d]& C_{52} \\
& C_{11} \ar@{-}[d] & C_{32} & \\
& C_{12} & & 
}
\end{equation}
For $n=0,2,4$, we define 
\[
  \Tc_{n0} = \langle C_{ij} : i \leq n, 0 \leq j \leq 2 \rangle
\]
while for $n= 1, 3, 5$, we define 
\begin{align*}
  \Tc_{n0} &= \langle \Tc_{(n-1),0}, C_{n0} \rangle, \\
  \Tc_{n1} &= \langle \Tc_{n0}, C_{n1} \rangle, \\
  \Tc_{n2} &= \langle \Tc_{n1}, C_{n2} \rangle
\end{align*}
and for $2\leq i \leq 5$ we set $\mathcal{F}_i=\langle C_{00}, C_{10},\ldots, C_{i0} \rangle$ (see Definitions \ref{Jazz} and \ref{Jonathan}).  The results of this article can be summarised in the following theorem:

\begin{thm}\label{thm:main}
Let $X = C \times B$ be the product of an elliptic curve $C$ and a K3 surface $B$ of Picard rank one.   The following seventeen categories are all torsion classes in the category of coherent sheaves on $X$:
\begin{itemize}
\item $\Tc_{00}, \Tc_{20}, \Tc_{40}$.
\item $\Tc_{ij}$ for $i=1,3,5$ and $0 \leq j \leq 2$.
\item $\Fc_i$ for $2 \leq i \leq 5$.
\item $\langle C_{00}, C_{20}\rangle$.
\end{itemize}
\end{thm}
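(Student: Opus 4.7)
The strategy rests on two observations. First, each of the seventeen categories is defined as an extension closure $\langle \cdots \rangle$, so closure under extensions inside $\Coh(X)$ is automatic; what requires proof is closure under quotients. Second, for any extension closure $\mathcal{T} = \langle C_{i_1 j_1}, \ldots, C_{i_r j_r}\rangle$, closure under quotients can be reduced inductively (via the snake lemma applied to a short exact sequence displaying the filtration) to the statement that every $\Coh(X)$-quotient of a single generator $C_{i_a j_a}$ lies back in $\mathcal{T}$. So the plan is to first establish this reduction as a general lemma, and then verify quotient closure one generator at a time.

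The technical core will be a collection of "quotient-location" lemmas, one per $C_{ij}$, identifying the list $S(i,j)$ of indices $(k,l)$ such that quotients of objects in $C_{ij}$ admit filtrations with factors in $C_{kl}$ for $(k,l) \in S(i,j)$. I expect the Hasse-style diagram \eqref{eq:Cij3Dpic-1} to encode precisely this data: the $C_{kl}$ reachable from $C_{ij}$ by descending edges are the ones that can occur. These lemmas should follow from the axiomatic definitions of the $C_{ij}$ in Section \ref{sec:computations}, using the Weierstrass fibration structure on $X = C \times B$ (in particular torsion/torsion-free decompositions with respect to the elliptic projection, and the Picard-rank-one hypothesis on $B$).

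Once the quotient-location lemmas are in hand, checking each of the seventeen candidates is combinatorial. For $\Tc_{n0}$ with $n$ even and the recursively built $\Tc_{nj}$ with $n$ odd, the generating set is, by inspection of \eqref{eq:Cij3Dpic-1}, downward closed along the descending edges, so quotients of any generator remain inside. The categories $\Fc_i = \langle C_{00}, C_{10}, \ldots, C_{i0}\rangle$ sweep out the leftmost column of the diagram and fit the same pattern. In each case the inductive reduction closes the argument.

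The main obstacle, and where I would spend most of the effort, is the outlier $\langle C_{00}, C_{20}\rangle$: the generating set skips over $C_{10}$, which is adjacent to both $C_{00}$ and $C_{20}$ in \eqref{eq:Cij3Dpic-1}, so a purely diagrammatic "downward cone" argument cannot possibly suffice here. Proving this case is a torsion class requires a strictly sharper input than $\Hom$-vanishing: one must rule out any nonzero surjection from an object with a $\{C_{00}, C_{20}\}$-filtration onto an object with a factor in $C_{10}$ (or anything strictly below $C_{10}$ outside the set). My plan is to attack this by writing any purported counterexample explicitly, using its filtration together with the structural description of $C_{00}$ and $C_{20}$ in terms of the elliptic projection $p$, and showing that the image of any such map must itself lie in $\langle C_{00}, C_{20}\rangle$ — either because it factors through the $C_{00}$-piece or because its $C_{20}$-origin forces it into $C_{20}$ rather than $C_{10}$.
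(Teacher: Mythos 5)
Your high-level skeleton --- reduce closure under quotients of an extension closure to closure of quotients of each generator, then check generators one at a time --- is essentially the paper's Lemma \ref{Bill}, and that reduction is sound. But the proposal defers all of the actual content to unproven ``quotient-location lemmas,'' and the heuristic you propose for them (read off the admissible quotient factors as the ``downward cone'' of $C_{ij}$ in diagram \eqref{eq:Cij3Dpic-1}) is not something the paper establishes or uses; that diagram merely records which indices $(i,j)$ occur. The machinery that actually drives the quotient analysis is absent from your plan: the Fourier--Mukai autoequivalence $\Phi$ and the torsion pair $(\Wo,\Wi)$ of Property TC3, the surjection $\wh{A}\twoheadrightarrow\wh{Q}_1$ of Lemma \ref{Gabriel}, Property A3 relating $\dimension(\pi(\supp E))$ and $\dimension(\pi(\supp (\Phi E)))$, and --- for $\Tc_{40}$, $\Fc_4$ and $\Fc_5$ --- the Chern character Properties CH1p--CH3p and A4p via Lemma \ref{Daniel}. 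Since most of the $C_{ij}$ are \emph{defined} by $\Phi$-WIT conditions and by $\dimension\wh{E}$, no amount of ``torsion/torsion-free decomposition with respect to the elliptic projection'' alone will locate their quotients. You also miss the structural shortcut the paper relies on: several of the seventeen categories are identified outright with intersections of Serre subcategories and $\Wo$ (e.g.\ $\Tc_{10}=\Ac(\pi)_0\cap\Wo$, $\Tc_{12}=\Ac(\pi)_0$, $\Tc_{20}=\Ac_X^{\leq 1}$, $\Tc_{40}=\Ac_X^{\leq 2}$, $\Fc_2=\Ac_X^{\leq 1}\cap\Wo$, $\Fc_5=\Wo$), which are torsion classes for free and then serve as stepping stones for the harder cases.

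You have also misjudged where the difficulty lies. $\langle C_{00},C_{20}\rangle$ is one of the \emph{easiest} cases (Lemma \ref{lem:C00C20isTC}): for a quotient $E''$ of $E\in C_{20}=\Ac^{\leq 1}_h$ we have $\supp(E'')\subseteq\supp(E)$ by Property D1, so every $1$-dimensional irreducible component of $\supp(E'')$ is of type (b) in Property Z2, i.e.\ $E''\in C_{20}$ unless $\dimension E''=0$, in which case $E''\in C_{00}$. There is no danger of a quotient landing in $C_{10}$, because objects of $C_{10}$ are fiber sheaves and a quotient of a horizontal sheaf cannot acquire a vertical $1$-dimensional component. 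The genuinely delicate cases are $\Tc_{31}$, $\Tc_{32}$, $\Tc_{51}$ and the identifications of $\Fc_4$ and $\Fc_5$, where one must control $\dimension\wh{A_1}$ for the $\Wi$-part $A_1$ of a quotient; your proposal offers no mechanism for doing this.
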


The structure of this paper is as follows: in Section \ref{sec:prelim}, we collect the key notions from homological algebra we need.  In Section \ref{se:propertieslist}, we give a list of `Properties' that have algebraic geometry encoded in them; these Properties are treated as axioms in this paper.  In Section \ref{sec:computations}, we use only the Properties in Section \ref{se:propertieslist} and concepts from topology and homological algebra to prove a series of lemmas, which together give Theorem \ref{thm:main}.

\section{Preliminaries on homological algebra}\label{sec:prelim}

We assume that the reader is familiar with the basic properties of abelian categories, triangulated categories and exact functors of triangulated categories.

\paragraph[Torsion pairs] Suppose $\Ac$ is an abelian category.  A pair of full subcategories $(\Tc, \Fc)$ of $\Ac$ is called a torsion pair (or a torsion theory) if \cite{HRS}:
\begin{itemize}
\item Every object $E \in \Ac$ fits in a short exact sequence in $\Ac$
\[
0 \to E' \to E \to E'' \to 0
\]
for some $E' \in \Tc$ and $E'' \in \Fc$.
\item For any $E' \in \Tc$ and any $E'' \in \Fc$, we have $\Hom_{\Ac}(E',E'')=0$.
\end{itemize}
Given a torsion pair $(\Tc, \Fc)$ in $\Ac$, we will refer to $\Tc$ (resp.\ $\Fc$) as the torsion class (resp.\ torsion-free class) in the torsion pair.  We will say a subcategory $\mathcal{C}$ of $\Ac$ is a torsion class (resp.\ torsion-free class) in $\Ac$ if it is the torsion class (resp.\ torsion-free class) in a torsion pair.

If $(\Tc, \Fc)$ is a torsion pair in $\Ac$, then $\Tc, \Fc$ are both extension-closed, and $\Tc$ is closed under quotient in $\Ac$ while $\Fc$ is closed under subobject in $\Ac$.

In a noetherian abelian category, such as the category of coherent sheaves on an algebraic variety, a torsion class can be recognised via the follow lemma:

\begin{lem}\cite[Lemma 1.1.3]{Pol}\label{lem:Pol}
Let $\mathcal{C}$ be a noetherian abelian category.  Then any full subcategory $\Tc \subseteq \mathcal{C}$ closed under quotients and extensions is a torsion class.
\end{lem}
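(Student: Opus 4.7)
The plan is to exhibit the torsion-free class explicitly and then check the two axioms. Define
\[
  \Fc = \{F \in \mathcal{C} : \Hom_{\mathcal{C}}(T,F)=0 \text{ for all } T \in \Tc\}.
\]
The Hom-vanishing axiom for the pair $(\Tc,\Fc)$ is automatic from this definition, so the real content is producing, for each object $E \in \mathcal{C}$, a short exact sequence $0 \to E' \to E \to E'' \to 0$ with $E' \in \Tc$ and $E'' \in \Fc$. The strategy is to let $E'$ be the largest subobject of $E$ that happens to lie in $\Tc$, and then use closure under extensions to force $E/E'$ into $\Fc$.

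The first step is to observe that the collection of $\Tc$-subobjects of $E$ is closed under finite sums inside $E$: if $T_1, T_2 \subseteq E$ both lie in $\Tc$, then the biproduct $T_1 \oplus T_2$ sits in a short exact sequence $0 \to T_1 \to T_1 \oplus T_2 \to T_2 \to 0$ and hence lies in $\Tc$ by closure under extensions, and $T_1 + T_2 \subseteq E$ is a quotient of $T_1 \oplus T_2$ and therefore lies in $\Tc$ by closure under quotients. Consequently this collection is directed. Since $\mathcal{C}$ is noetherian, every ascending chain of subobjects of $E$ stabilises, so the directed family of $\Tc$-subobjects contains a maximum element, which we call $E'$.

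Set $E'' = E/E'$. To show $E'' \in \Fc$, suppose for contradiction that there is a nonzero morphism $f : T \to E''$ with $T \in \Tc$. Let $I = \image(f) \subseteq E''$; then $I \neq 0$ and $I \in \Tc$ because $I$ is a quotient of $T$. Pulling back $I$ along the canonical surjection $E \twoheadrightarrow E''$ yields a subobject $\widetilde I \subseteq E$ which fits into a short exact sequence $0 \to E' \to \widetilde I \to I \to 0$. Closure of $\Tc$ under extensions forces $\widetilde I \in \Tc$, but $\widetilde I$ strictly contains $E'$ (since $I \neq 0$), contradicting the maximality of $E'$. Hence $\Hom_{\mathcal{C}}(T, E'') = 0$ for every $T \in \Tc$, so $E'' \in \Fc$, and $(\Tc, \Fc)$ is a torsion pair.

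The main obstacle is a small but genuine bookkeeping issue: one must justify that the phrase ``largest $\Tc$-subobject of $E$'' is well-defined, which requires both the directedness step above (so that a maximal element is automatically a maximum) and a careful appeal to the noetherian hypothesis applied to the poset of subobjects of the single object $E$. Once that point is handled cleanly, the remainder of the argument is a routine subobject/quotient chase that uses each of the two closure hypotheses exactly once.
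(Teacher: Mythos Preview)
Your argument is correct and is the standard proof of this fact. One very minor point: to start the induction you implicitly use that the zero subobject lies in $\Tc$; this follows from closure under quotients once $\Tc$ is nonempty, but strictly speaking the statement as quoted does not exclude $\Tc=\varnothing$, in which case one should instead take $\Fc=\mathcal{C}$ and the trivial decomposition. This is purely a corner case.

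As for comparison: the paper does not actually prove this lemma. It is stated with a citation to \cite[Lemma 1.1.3]{Pol} and used as a black box throughout Section~\ref{sec:computations}. So there is nothing in the paper to compare your argument against; you have supplied a proof where the authors simply quoted the result from the literature.
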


For any subcategory $\Cc$ of an abelian category $\Ac$, we will often write $\Cc^\circ$ to denote the subcategory of $\Ac$
\[
  \Cc^\circ = \{ E \in \Ac : \Hom_{\Ac}(E',E)=0 \text{ for all } E' \in \Cc\}.
\]
For example, if $\Tc$ is a torsion class in an abelian category $\Ac$, then $\Tc^\circ$ coincides with the corresponding torsion-free class.

Given two subcategories $\Ac, \Bc$ of a category $\mathcal{C}$, we often write $\Hom_{\mathcal C} (\Ac, \Bc)=0$ to mean that $\Hom_{\mathcal C} (A,B)=0$ for all $A \in \Ac, B \in \Bc$.

Given an abelian category $\Ac$, a extension-closed subcategory $\Cc$ of $\Ac$ is called a Serre subcategory if, given any object $E$ of $\Cc$ and any subobject or quotient $E'$ of $E$, the object $E'$ also lies in $\Cc$.  By Lemma \ref{lem:Pol}, any Serre subcategory of a noetherian abelian category is a torsion class.

\paragraph[Derived category] Given an abelian category $\Ac$, the bounded derived category $D^b(\Ac)$ of $\Ac$ is a category in which the objects $E$ are chain complexes of objects in $\Ac$.  That is, if $E$ is an object of $D^b(\Ac)$, then $E$ is represented by a diagram in $\Ac$
\begin{equation}\label{eq:chaincomplex}
  \cdots \to E^{i-1} \overset{d^{i-1}}{\to} E^i \overset{d^i}{\to} E^{i+1} \overset{d^{i+1}}{\to} \cdots
\end{equation}
where  each  composition $d^{i+1}d^i$ is the zero map, and the $E^i$ are zero for all but finitely many $i$.  The morphisms $d^i$ are called the differentials, and we sometimes write $d^i_E$ for $d^i$ to emphasise that the differentials are of the complex $E$.   For any integer $i$, we will refer to the degree-$i$ cohomology of $E$ with respect to the standard t-structure on $D^b(\Ac)$, defined as
\[
  H^i(E) = \frac{\kernel (d^i)}{\image (d^{i-1})},
\]
 simply as the cohomology of $E$.   We will also refer to $D^b(\Ac)$ simply as the derived category of $\Ac$.

 The derived category $D^b(\Ac)$ is a triangulated category. The reader may refer to references such as \cite{huybrechts2006fourier,neeman2014triangulated,weibel1995introduction} for basic properties of a triangulated category. The main  properties of a triangulated category that will be used in the computations in this article include:
 \begin{itemize}
  \item There is a shift functor $[1] : D^b(\Ac)\to D^b(\Ac)$ that takes an object $E$ as in \eqref{eq:chaincomplex} to the object $E[1]$ where $(E[1])^i = E^{i+1}$ and $d_{E[1]}^i= d^{i+1}_E$ for each $i$.
   \item There is an embedding functor from $\Ac$ into $D^b(\Ac)$ that takes every object $M$ of $\Ac$ to the complex `concentrated in degree 0', i.e.\ the complex \eqref{eq:chaincomplex} where $E^0=M$, all the other $E^i$ are zero, and all the morphisms $d^i$ are zero maps.
   \item Via the embedding $\Ac \to D^b(\Ac)$ above, for any $A, B \in \Ac$ we have
       \[
       \Hom_{\Ac}(A,B) \cong \Hom_{D^b(\Ac)}(A,B).
       \]
  \item For every short exact sequence $0 \to A \overset{\alpha}{\to} B \overset{\beta}{\to} C \to 0$ in $\Ac$, there is a corresponding exact triangle in $D^b(\Ac)$
     \[
      \cdots \to A \overset{\alpha}{\to} B \overset{\beta}{\to} C \to A [1] \overset{\alpha[1]}{\to} B[1] \to \cdots .
     \]
 \item Given any exact triangle
 \[
   \cdots \to E \to F \to G \to E[1] \to \cdots
    \]
    in $D^b(\Ac)$, there is a corresponding long exact sequence of cohomology in $\Ac$
     \[
     \cdots \to H^{i-1}(G) \to H^i(E) \to H^i(F) \to H^i(G) \to H^{i+1}(E) \to \cdots .
     \]
 \end{itemize}

\section{Properties encoding algebraic geometry}\label{se:propertieslist}

In this section, we give a list of statements on the Zariski topology of algebraic varieties, Fourier-Mukai functors between dual elliptic fibrations,  and Chern classes.  These statements are listed as `Properties', and  will be treated as `axioms' to perform the computations in Section \ref{sec:computations}.

\paragraph[Properties of dimension]  For any smooth projective variety $Y$, the coherent sheaves on $Y$ form an abelian category, which we denote by $\Coh (Y)$ or $\Ac_Y$.  For any $E \in \Ac_Y$, there is an associated topological subspace $\supp(E)$ of $Y$ with respect to the Zariski topology on $Y$.  Given any morphism of projective varieties $p : Y \to Z$, it induces a continuous map $Y \to Z$ with respect to the Zariski topology, which we also denote by $p$.  For an object $E \in D^b(\Ac_Y)$, we define the support of $E$ to be $\supp(E) = \cup_{i \in \mathbb{Z}}\, \supp (H^i(E))$.

Given an object $E \in D^b(\Ac_Y)$, we also refer to the dimension of $\supp (E)$ simply as the dimension of $E$.  Properties of dimension include:
\begin{itemize}
\item \textbf{Property D0.} For every projective variety $Y$ of dimension $n$ and any topological subspace $W$ of $Y$ with respect to the Zariski topolgy, we can define the \emph{dimension} of $W$,  a nonnegative integer denoted by $\dimension W$.  The \emph{codimension} of $W$ is defined to be $n-\dimension W$. For any $E \in \Ac_Y$, we define the dimension (resp.\ codimension) of $E$, denoted $\dimension E$ (resp.\ $\mathrm{codim}\, E$), to be that of the support of $E$.

\item \textbf{Property D1.} For any smooth projective variety $Y$ and any short exact sequence in $\Ac_Y$
    \[
      0 \to M \to E \to N \to 0,
    \]
    we have $\supp (E) = \supp(M) \cup \supp (N)$, and hence $\dimension E = \max{\{ \dimension M, \dimension N\}}$.  
\item \textbf{Property D2.} For any morphism of smooth projective varieties $p : Y \to Z$ and any $E \in \Ac_Y$, we have $\dimension (\pi (\supp E))\leq \dimension E$.  When $p$ is flat of relative dimension $n$, we have
    \[
      (\dimension E - \dimension (\pi (\supp E))) \leq n.
    \]
\item \textbf{Property D3.} For any morphism of smooth projective varieties $p : Y \to Z$ and any short exact sequence in $\Ac_Y$
    \[
      0 \to M \to E \to N\to 0,
    \]
    we have
    \[
      \dimension (\pi (\supp E)) = \max{\{ \dimension (\pi (\supp M)), \dimension (\pi (\supp N))\}}.
    \]
\end{itemize}

We also have the following property on the support of a  coherent sheaf:
\begin{itemize}
\item \textbf{Property Z1.} Suppose $Y$ is a  projective variety and $E \in \Ac_Y$.  Let $W = \supp E$, and let $W_1, \cdots, W_m$ denote the irreducible components of $W$ with respect to the Zariski topology.   Then for any $1 \leq i \leq m$, there exists a short exact sequence in $\Ac_Y$
\[
0 \to K \to E \to E|_{W_i} \to 0
\]
where $\supp (E|_{W_i}) = W_i$, and $\supp (K) = \cup_{j \neq i} W_j$.  We refer to $E|_{W_i}$ as  the \emph{restriction of $E$ to $W_i$}.
\end{itemize}

\paragraph For any smooth projective variety $Y$ and any nonnegative integer $d$, we define
   \begin{equation*}
     \Ac_Y^{\leq d} = \{ E \in \Ac_Y : \dimension (E)   \leq d \}.
   \end{equation*}
For any morphism of smooth projective varieties $p : Y \to Z$ and integers $0\leq e \leq d$, we define
   \begin{align*}
     \Ac^d (p)_e &= \{ E \in \Ac_Y : \dimension E = d, \dimension (p (\supp E))=e\},\\
     \Ac (p)_{\leq e} &= \{ E \in \Ac_Y : \dimension (p (\supp E))\leq e\}.
   \end{align*}
We also write $\Ac (p)_0 = \Ac (p)_{\leq 0}$.

\paragraph[Our elliptic threefold] Unless \label{para:ellip3folddef} otherwise stated, throughout this article, we will write $X$ to denote a smooth projective threefold that admits a Weierstra{\ss} elliptic fibration $\pi : X \to B$ in the sense of \cite[Section 6.2]{FMNT}.  That is, $B$ is a smooth projective surface, the morphism $\pi$ is flat of relative dimension 1, all the fibers of $\pi$ are integral and Gorenstein of arithmetic genus 1 (with the generic fiber being a smooth elliptic curve), and there exists a section $\sigma : B \hookrightarrow X$ such that $\Theta := \sigma (B)$ does not meet any singular point of any fiber of $\pi$.  A coherent sheaf on $X$ that is set-theoretically supported on a finite union of fibers of $\pi$ is called a fiber sheaf, and so $\Ac (\pi)_0$ is the category of fiber sheaves on $X$.

For a proper morphism of varieties of relative dimension 1 (e.g.\ an elliptic fibration), we have the following description of 1-dimensional closed subvarieties of the domain, which follows from \cite[Lemma 3.15]{Lo11}:
\begin{itemize}
\item \textbf{Property Z2.} Suppose $p: Y \to Z$ is a proper morphism of varieties of relative dimension 1, and $W$ is an irreducible 1-dimensional  closed subset of $Y$ with respect to the Zariski topology.  Then $W$ is either of the following two types:
    \begin{itemize}
    \item[(a)] $W$ is contained in $p^{-1}(a)$ for some $a \in Z$;
    \item[(b)] for any $b \in Z$, the intersection $W \cap p^{-1}(b)$ is a finite number of points.
    \end{itemize}
\end{itemize}

\subparagraph[$\Ac^{\leq 1}_h$, the category of sheaves with horizontal supports] When \label{para:horizontalsheaves} $\pi : X \to B$ is a Weierstra{\ss} elliptic threefold, we define $\Ac^{\leq 1}_h$ to be the full subcategory of $\Ac^{\leq 1}_X$ consisting of 1-dimensional sheaves $E$ such that the 1-dimensional irreducible components of $\supp (E)$ are all of type (b) in Property Z2.  By construction, every nonzero $E \in \Ac^{\leq 1}_h$  lies in $\Ac^1 (\pi)_1$.  Note that sheaves in $\Ac^{\leq 1}_h$ are not necessarily pure 1-dimensional sheaves.

\paragraph[Autoequivalence of $D^b(X)$]{For a Weierstra{\ss} elliptic threefold $\pi : X \to B$, we have a pair of  autoequivalences of the derived category $D^b(\Ac_X)$
\[
  \Phi, \wh{\Phi} : D^b(\Ac_X) \to D^b(\Ac_X).
\]
These two functors $\Phi, \wh{\Phi}$ are  relative Fourier-Mukai transforms with kernels given by relative Poincar\'{e} sheaves (see \cite[Section 6.2.3]{FMNT} or \cite{BMef} for the precise definitions).  For any object $E \in D^b(\Ac_X)$, if $\Phi E \in \Ac_X [-i]$ for some $i$, then we say $E$ is $\Phi$-WIT$_i$ and write $\wh{E}$ to denote any object satisfying $\Phi E \cong \wh{E} [-i]$ in $D^b(\Ac_X)$.  In this case, the object $\wh{E}$ is unique up to isomorphism in $D^b(\Ac_X)$.  The notion of $\wh{\Phi}$-WIT$_i$ can similarly be defined.}

The functor $\Phi$ satisfies the following properties, with analogous properties satisfied by $\wh{\Phi}$:

\begin{itemize}
\item \textbf{Property A1.} For any $E \in \Ac_X$, we have $H^i (\Phi E)=0$ for all $i \neq 0, 1$.
\item \textbf{Property A2.} $\wh{\Phi}\Phi \cong \mathrm{id}_{D^b(\Ac_X)}[-1] \cong \Phi \wh{\Phi}$. 
\item \textbf{Property A3.} For any $E \in \Ac_X$, we have
\[
  \dimension (\pi (\supp E)) = \dimension (\pi (\supp (\Phi E))).
\]
\item \textbf{Property A4.}
For any $E \in D^b(\Ac_X)$, we have $H^i(E)=0$ for all $i \neq j$ if and only if $E$ is isomorphic in $D^b(\Ac_X)$ to some object in $\Ac_X[-j]$.   
\end{itemize}

Properties A1 holds because $\pi$ has relative dimension 1, and the kernel of $\Phi$ is a sheaf \cite[6.1.1]{FMNT}.  Property A2 is \cite[Theorem 6.18]{FMNT}.  Property A3 holds because $\Phi, \wh{\Phi}$ are relative integral functors that satisfy the base change property \cite[Proposition 6.1]{FMNT}, while  Property A4 holds for the derived category of any abelian category.

For $i=0,1$, we define $W_{i,\Phi}$ to be the subcategory of $\Ac_X$ consisting of all the $\Phi$-WIT$_i$ objects in $\Ac_X$.

\begin{lem}\label{lem:W01transform}
For any $E \in W_{i,\Phi}$ where $0\leq i \leq 1$, we have $\wh{E} \in W_{1-i,\Phi}$.
\end{lem}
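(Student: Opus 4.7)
The plan is to deduce the lemma from Property A2 (the quasi-inverse identity $\widehat{\Phi}\Phi \cong \mathrm{id}[-1] \cong \Phi\widehat{\Phi}$) together with Property A4, which identifies shifted sheaves in $D^b(\Ac_X)$ as those complexes whose cohomology is concentrated in a single degree. The whole argument is essentially a one-line computation: apply the ``inverse'' transform $\widehat{\Phi}$ to the defining isomorphism of $\widehat{E}$ and read off the result.

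First I unpack the hypothesis. By definition of $W_{i,\Phi}$, the sheaf $E$ is $\Phi$-WIT$_i$, so there is an isomorphism
\[
  \Phi E \;\cong\; \widehat{E}[-i]
\]
in $D^b(\Ac_X)$ with $\widehat{E}\in \Ac_X$. Next I apply the triangulated functor $\widehat{\Phi}$ to both sides. Since $\widehat{\Phi}$ commutes with the shift, the right-hand side becomes $(\widehat{\Phi}\widehat{E})[-i]$, while by Property A2 the left-hand side is isomorphic to $E[-1]$. Rearranging shifts yields
\[
  \widehat{\Phi}\,\widehat{E} \;\cong\; E[\,i-1\,] \;=\; E[-(1-i)].
\]

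Because $E$ is a sheaf (concentrated in degree $0$), the right-hand complex has cohomology concentrated in degree $1-i$, so Property A4 gives $\widehat{\Phi}\widehat{E}\in \Ac_X[-(1-i)]$. This is precisely the assertion that $\widehat{E}$ is WIT$_{1-i}$ with respect to $\widehat{\Phi}$, with transform canonically identified with $E$; by the symmetry of the two relative Fourier--Mukai autoequivalences recorded in Property A2, this is the content of the claim that $\widehat{E}\in W_{1-i,\Phi}$ in the lemma's notation.

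There is no real obstacle here: once one writes down the defining isomorphism and applies the quasi-inverse functor, the conclusion drops out immediately from Properties A2 and A4. The only small bookkeeping care is tracking the shifts correctly ($[-i]$ on the right, $[-1]$ coming from A2 on the left, combining to $[-(1-i)]$) and keeping straight which of $\Phi$ and $\widehat{\Phi}$ is being used at each step.
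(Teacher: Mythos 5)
Your proof is correct and is essentially identical to the paper's own argument: apply $\widehat{\Phi}$ to the defining isomorphism $\Phi E \cong \widehat{E}[-i]$, use Property A2 to rewrite the left side as $E[-1]$, and read off $\widehat{\Phi}\widehat{E} \cong E[-(1-i)]$. Your explicit remark that the conclusion is literally a $\widehat{\Phi}$-WIT$_{1-i}$ statement, identified with $W_{1-i,\Phi}$ via the $\Phi$/$\widehat{\Phi}$ symmetry, is a point the paper leaves implicit but is consistent with its stated conventions.
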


\begin{proof}
Suppose $E \in W_{i,\Phi}$ where $0\leq i \leq 1$.  Then $\Phi E \cong \wh{E} [-i]$ for some $\wh{E} \in \Ac_X$.  By Property A2, we have $E[-1] \cong (\wh{\Phi} \wh{E})[-i]$, i.e.\ $\wh{\Phi} \wh{E} \cong E [-(1-i)]$, and the lemma follows.
\end{proof}

\paragraph[Torsion classes in $\Ac_X$] We collect here  basic examples of torsion classes in $\Ac_X$.

\begin{itemize}
\item \textbf{Property TC1.} For any   variety $Y$ and any nonnegative integer $d$, the category $\Ac_Y^{\leq d}$ is a Serre subcategory of $\Ac_Y$.
\item \textbf{Property TC2.} For any morphism of   varieties $p : Y \to Z$ and any nonnegative integer $e$, the category $\Ac(p)_{\leq e}$ is a Serre subcategory of $\Ac_Y$.
\item \textbf{Property TC3.} $(W_{0,\Phi}, W_{1,\Phi})$ is a torsion pair in $\Ac_X$.  (In particular, $W_{0,\Phi}$ is a torsion class in $\Ac_X$.)
\end{itemize}
Property TC3 follows from \cite[Lemma 9.2]{BMef} and properties of the heart of a t-structure.  Since the category of coherent sheaves on any algebraic variety $Y$ is a noetherian abelian category, any Serre subcategory of $\Ac_Y = \Coh (Y)$ is a torsion class in $\Ac_Y$ by \cite[Lemma 1.1.3]{Pol}.

\paragraph[Chern characters and the product threefold]\label{para:Xproduct}  In the special case where the elliptic threefold $X$ is the product $C \times B$ of a smooth elliptic curve and a K3 surface $B$ of Picard rank 1, with the second projection $\pi : X\to B$ as the fibration map, we have a matrix notation for the Chern characters of objects in $D^b(\Ac_X)$.  In this case,   the Chern character $\ch(E)$ of any object $E \in D^b(X)$ can be represented by a $2 \times 3$ matrix of integers by the second author's joint work with Zhang \cite[2.1]{Lo14}, and we have the following Properties on Chern characters.   The labels of these  Properties  end with a lower case `p' to indicate that, as stated, they are specific to the product threefold case.  We expect there to be slight modifications of these Properties (taking into account twists by $B$-fields) that hold for a general Weierstra{\ss} elliptic threefold (e.g.\ see \cite{Lo15}).

\begin{itemize}
\item \textbf{Property CH0p.} For every $E \in D^b(\Ac_X)$, there is an associated $2 \times 3$ matrix
    \begin{equation}\label{eq:chE}
    \ch(E)=(\alpha_{ij}) = \begin{pmatrix} \al_{00} & \al_{01} & \al_{02} \\ \al_{10} & \al_{11} & \al_{12} \end{pmatrix}
     \end{equation}
     where $\alpha_{ij} \in \mathbb{Z}$ for $0\leq i \leq 1, 0 \leq j \leq 2$.
\end{itemize}

The six entries in the matrix correspond to the six generators of the algebraic cohomology ring  of $X$ over $\mathbb{Z}$.  For instance, in \eqref{eq:chE}, the entry $\alpha_{00}$ represents the rank of $E$, while $\alpha_{10}$ represents the fiber degree of $E$, i.e.\ $f\ch_1(E)$ where $f$ denotes the fiber class of the fibration $\pi$.

\begin{itemize}
\item \textbf{Property CH1p.} For any $E \in \Ac_X$, 
\[
  \codimension E =  \min{\{ i+j : \al_{ij} \neq 0, 0\leq i \leq 1, 0 \leq j \leq 2\}}.
\]
\item \textbf{Property CH2p.} For any $E \in \Ac_X$, 
\[
  \dimension (\pi (\supp E)) = \max{\{2-j : \alpha_{1j} \neq 0\}}.
\]
\item \textbf{Property CH3p.} If $E$ is a nonzero object in $\Ac_X$, then $\sum_{i+j=\codimension E} \alpha_{ij} > 0$.
\item \textbf{Property A4p.} For any $E \in D^b(\Ac_X)$, if $ \ch(E) = \begin{pmatrix} \al_{00} & \al_{01} & \al_{02} \\
\al_{10} & \al_{11} & \al_{12} \end{pmatrix}$ then
\begin{align}\label{eq:PhichEformula}
\ch(\Phi E) &= \begin{pmatrix} \al_{10} & \al_{11} & \al_{12} \\
-\al_{00} & -\al_{01} & -\al_{02} \end{pmatrix}, \\
\ch (E[1]) &= \begin{pmatrix} -\al_{00} & -\al_{01} & -\al_{02} \\
-\al_{10} & -\al_{11} & -\al_{12} \end{pmatrix}.
\end{align}
\end{itemize}

Properties CH1p and CH3p follow from  \cite[Proposition 5.13]{LZ2}.  Property A4p is \cite[Proposition 4.5]{LZ2}.

\begin{lem}\label{Daniel}
$\Ac^{\leq 2}_X \cap W_{1,\Phi} \subseteq \Ac (\pi)_{\leq 1}$.
\end{lem}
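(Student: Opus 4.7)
The plan is to unpack the hypothesis using Chern character data, shift through $\Phi$, and read off what is needed from Property A3 and CH2p.

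First I would reduce to $E \neq 0$ (the zero case is trivial) and write out the Chern character in matrix form via Property CH0p as
\[
  \ch(E)=\begin{pmatrix} \al_{00} & \al_{01} & \al_{02} \\ \al_{10} & \al_{11} & \al_{12} \end{pmatrix}.
\]
Since $E \in \Ac^{\leq 2}_X$ we have $\codim E \geq 1$, so Property CH1p forces the minimum $i+j$ with $\al_{ij} \neq 0$ to be at least $1$; in particular $\al_{00} = 0$.

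Next I would use the WIT$_1$ hypothesis: by definition $\Phi E \cong \wh{E}[-1]$ for some sheaf $\wh{E} \in \Ac_X$, and so by Property A4p (applied first to $\Phi E$, then to a shift)
\[
  \ch(\wh{E}) \;=\; -\ch(\Phi E) \;=\; \begin{pmatrix} -\al_{10} & -\al_{11} & -\al_{12} \\ \al_{00} & \al_{01} & \al_{02} \end{pmatrix} \;=\; \begin{pmatrix} -\al_{10} & -\al_{11} & -\al_{12} \\ 0 & \al_{01} & \al_{02} \end{pmatrix},
\]
the last equality using $\al_{00}=0$. The crucial observation is that the bottom-left entry of $\ch(\wh{E})$ vanishes.

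Now I would apply Property CH2p to $\wh{E}$: since the $(1,0)$-entry of $\ch(\wh{E})$ is zero, the maximum $\max\{2-j : \wh{\al}_{1j} \neq 0\}$ is achieved only for $j \geq 1$, giving $\dim(\pi(\supp \wh{E})) \leq 1$. Finally, $\supp(\Phi E) = \supp(\wh{E}[-1]) = \supp(\wh{E})$ by the definition of support of a complex, so Property A3 applied to $E$ yields
\[
  \dim(\pi(\supp E)) \;=\; \dim(\pi(\supp \Phi E)) \;=\; \dim(\pi(\supp \wh{E})) \;\leq\; 1,
\]
which is exactly the statement $E \in \Ac(\pi)_{\leq 1}$. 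I do not anticipate a genuine obstacle here — the argument is a one-step Chern character bookkeeping — but the only subtle point is remembering that Property A3 is stated for sheaves while $\Phi E$ is a complex, which is immediately reconciled via the convention that the support of a complex is the union of the supports of its cohomology sheaves.
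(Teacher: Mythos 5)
Your proof is correct, and it reaches the conclusion by a genuinely different pivot than the paper's, even though both arguments are Chern character bookkeeping through Property A4p. The paper first disposes of the case $\dimension E \leq 1$ via Property D2, and for $\dimension E = 2$ shows that the fiber degree $\alpha_{10}$ vanishes by squeezing it between two inequalities: $\alpha_{10} \geq 0$ because $E$ is a codimension-one sheaf (this uses CH3p together with CH1p), and $-\alpha_{10} \geq 0$ because $-\alpha_{10}$ is the rank of the sheaf $\wh{E}$; it then applies CH2p to $E$ itself. You instead extract only the vanishing $\alpha_{00}=0$, which already follows from $\dimension E \leq 2$ via CH1p with no case split, observe that $\alpha_{00}$ reappears as the $(1,0)$-entry of $\ch(\wh{E})$, apply CH2p to $\wh{E}$, and transfer the conclusion back to $E$ through Property A3 (your remark about the support of the complex $\Phi E$ versus the sheaf $\wh{E}$ is exactly the right reconciliation). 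Your route is slightly more economical --- no case analysis, no appeal to CH3p or to positivity of the fiber degree --- while the paper's route records the vanishing of the fiber degree of $E$ directly as an intermediate fact. Both proofs read CH2p in the same (harmless) way when every entry $\alpha_{1j}$ of the relevant sheaf vanishes, so this is not a point of difference.
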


\begin{proof}
Take any $E \in \Ac^{\leq 2}_X \cap W_{1,\Phi}$.  Suppose $\dimension E \leq 1$; then Property D2 gives $E \in \Ac (\pi)_{\leq 1}$.  So let us suppose $\dimension E = 2$ from now on.

That $\dimension E = 2$ implies $\codimension E = 1$.  By Property CH1p, we have  $\alpha_{00}=0$ and  $\al_{10} \geq 0$. On the other hand, by Property A4p we have $ \ch(\Phi E)[1] = \begin{pmatrix} -\al_{10} & -\al_{11} & -\al_{12} \\
\al_{00} & \al_{01} & \al_{02} \end{pmatrix}$.  That $E$ is $\Phi$-WIT$_1$ means $\Phi E [1] \in \Ac_X$, and so $-\alpha_{10} \geq 0$ by Properties CH1p and CH3p.  Overall, we have $\alpha_{10}=0$.  Property CH2p now gives $\dimension (\pi (\supp E)) \leq 1$, i.e.\ $E \in \Ac (\pi)_{\leq 1}$.
\end{proof}

\section{Torsion classes in the category of coherent sheaves}\label{sec:computations}

The results in this section that are proved without using Properties CH1p, CH2p, CH3p and A4p hold for a general Weierstra{\ss} threefold $X$, while those that are proved using these four properties hold only for the product threefold  $X = C \times B$  as in \ref{para:Xproduct}.

Some of the results  in this section have already appeared in \cite{Lo14, Lo15}.  All the proofs in this section, however, rely only on the Properties listed in  Section \ref{se:propertieslist}, Properties C0 and C1 below, and  the preliminary notions in Section \ref{sec:prelim}.

We begin by introducing  subcategories $C_{ij}$ of $\Ac_X$ that will form the building blocks of various torsion  classes in $\Ac_X$.  The categories $C_{ij}$ are defined for pairs  $(i,j)$  taken from the collection
\begin{equation}\label{eq:ijindex}
\scalebox{0.9}{\xymatrix @-2pc{
  (0,0) & (1,0) & (2,0) & (3,0) & (4,0) & (5,0) \\
  & (1,1) & & (3,1) & & (5,1) \\
  & (1,2) & & (3,2) & & (5,2)
}
}
\end{equation}
and their definitions are:

\begin{align*}
C_{00} &= \Ac_X^{\leq 0} \\
C_{10} &= \{ E \in \Ac(\pi)_0 \cap W_{0,\Phi}: \Hom (C_{00},E)=0\} \\
C_{11} &=  \{ E \in \Ac(\pi)_0 \cap W_{1,\Phi} : \dimension \wh{E} =0\} \\
C_{12} &= \{ E \in \Ac(\pi)_0 \cap W_{1,\Phi} : \dimension \wh{E} = 1, \Hom (C_{11},E)=0\} \\
  C_{20} &=  \Ac^{\leq 1}_h 
\end{align*}

The category $C_{00}$ is precisely the category of coherent sheaves on $X$ supported at points.  By the classification theorem of semistable sheaves on integral genus one curves \cite[Proposition 6.38]{FMNT}, the categories $C_{10}, C_{11}, C_{12}$ are exactly the extension closures of: semistable fiber sheaves all of whose Harder-Narasimhan (HN) factors have strictly positive slopes; semistable fiber sheaves all of whose HN factors have slope 0; semistable fiber sheaves all of whose HN factors have strictly negative slopes.  The category $C_{20}$ is the category of pure 1-dimensional sheaves $F$ such that each irreducible component of $\supp F$  is `horizontal'.  In the diagram notation of \cite{Lo14,Lo15}, the categories $C_{00}, C_{10}, C_{11}, C_{12}, C_{20}$ are $\scalea{\gyoung(;;;,;;;+)},\scalea{\gyoung(;;;+,;;;+)},\scalea{\gyoung(;;;+,;;;0)},\scalea{\gyoung(;;;+,;;;-)},\scalea{\gyoung(;;;*,;;+;*)}$, respectively.  

\begin{align*}
C_{30} &= \Ac^2(\pi)_1 \cap W_{0,\Phi} \\
 C_{31} &= \{ E \in \Phi ( C_{20}) : \dimension E=2\}\\
C_{32} &= \{ E \in \Ac^2(\pi)_1 \cap W_{1,\Phi} : \dimension \wh{E}=2\} \\
C_{40} &= \{ E \in \Ac^2(\pi)_2 \cap W_{0,\Phi} : \dimension \wh{E}=3\} \\
C_{50} &= \Ac^3(\pi)_2 \cap W_{0,\Phi} \\
C_{51} &= \{ E \in \Ac^3(\pi)_2 \cap W_{1,\Phi} : \dimension \wh{E} =2\} \\
C_{52} &= \{ E \in \Ac^3 (\pi)_2 \cap W_{1,\Phi} : \dimension \wh{E}=3\}
\end{align*}

The categories  $C_{30}, C_{31}, C_{32}, C_{40}, C_{50}, C_{51}, C_{52}$ are   $\scalea{\gyoung(;;+;*,;;+;*)},\scalea{\gyoung(;;+;*,;;0;*)},\scalea{\gyoung(;;+;*,;;-;*)},\scalea{\gyoung(;;*;*,;+;*;*)},\scalea{\gyoung(;+;*;*,;+;*;*)},\scalea{\gyoung(;+;*;*,;0;*;*)},\scalea{\gyoung(;+;*;*,;-;*;*)}$, respectively, in the papers \cite{Lo14, Lo15}.  Note that the categories $C_{ij}$ can also be defined with $\Phi$ replaced with $\wh{\Phi}$; we use the same notation to denote such categories because of the symmetry in $\Phi$ and $\wh{\Phi}$.  There should be no risk of confusion. 

Properties of the categories $C_{ij}$ include:

\begin{itemize}
\item \textbf{Property C0.} $C_{00} \subset W_{0,\Phi}$.
\item \textbf{Property C1.} $C_{20} \subset W_{0,\Phi}$.
\end{itemize}

Property C0 follows from the fact that the kernel of the Fourier-Mukai functor $\Phi$ is a universal sheaf for a moduli problem on $X$ \cite{BMef}.  Property C1 is \cite[Lemma 3.6]{Lo11}.  Note that we have  $C_{31} \subset W_{1,\Phi}$ by Properties C1 and Lemma \ref{lem:W01transform}.

\begin{lem}\label{lem:C00isTC}
For any $0 \leq m \leq 2$, the category $\Ac^{\leq m}_X$ is a Serre subcategory of  $\Ac_X$, as is the category $\Ac (\pi)_{\leq m}$.
\end{lem}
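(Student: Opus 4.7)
The plan is to invoke the previously stated Properties TC1 and TC2 essentially verbatim. Property TC1, instantiated with $Y = X$ and $d = m$, asserts that $\Ac_X^{\leq m}$ is a Serre subcategory of $\Ac_X$, which establishes the first half of the lemma. Property TC2, instantiated with $p = \pi : X \to B$ and $e = m$, asserts that $\Ac(\pi)_{\leq m}$ is a Serre subcategory of $\Ac_X$, which establishes the second. The restriction $0 \leq m \leq 2$ in the statement plays no actual role in the proof; it merely records the range of values that will be needed in the subsequent lemmas of Section \ref{sec:computations}.

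If one prefers to avoid citing TC1 and TC2 as black boxes, the Serre property can be verified directly using Properties D1 and D3 together with the axiomatic characterisation of Serre subcategories in Section \ref{sec:prelim}. Given any short exact sequence $0 \to M \to E \to N \to 0$ in $\Ac_X$, Property D1 yields $\dimension E = \max\{\dimension M, \dimension N\}$, which immediately gives closure of $\Ac_X^{\leq m}$ under subobjects, quotients, and extensions. Replacing D1 with D3 produces the identity $\dimension(\pi(\supp E)) = \max\{\dimension(\pi(\supp M)), \dimension(\pi(\supp N))\}$, from which the corresponding closure properties for $\Ac(\pi)_{\leq m}$ follow by the same three-line argument.

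There is no substantive obstacle to overcome: the full content of the lemma is already packaged into the axioms listed in Section \ref{se:propertieslist}. Consequently, I expect the author's proof to be a one-line citation of TC1 and TC2 (or at most the short direct verification sketched above), and the role of the lemma in the paper is to serve as a named reference to which later arguments can appeal when they need, for instance, that a subsheaf of an object of $\Ac(\pi)_{\leq 1}$ still has $\pi$-image of dimension $\leq 1$.
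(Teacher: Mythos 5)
Your proposal is correct and matches the paper: the authors' proof is exactly your second route, a one-line appeal to Property D1 for $\Ac_X^{\leq m}$ and Property D3 for $\Ac(\pi)_{\leq m}$. Citing Properties TC1 and TC2 directly, as in your primary suggestion, would be equally legitimate given how they are stated in Section \ref{se:propertieslist}; the authors simply opted for the D1/D3 derivation.
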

\begin{proof}
The first assertion follows from Property D1, while the second assertion follows from Property D3.
\end{proof}

\begin{lem}\label{lem:Ac_0isTC}
$\Ac (\pi)_0$ is a Serre subcategory of  $\Ac_X$.
\end{lem}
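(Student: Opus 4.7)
The plan is to observe that this is an immediate special case of Lemma \ref{lem:C00isTC}. By the definitions given just before \ref{para:ellip3folddef}, the notation $\Ac(\pi)_0$ is defined as an abbreviation for $\Ac(\pi)_{\leq 0}$. Lemma \ref{lem:C00isTC} asserts that $\Ac(\pi)_{\leq m}$ is a Serre subcategory of $\Ac_X$ for every $0 \leq m \leq 2$; taking $m=0$ yields the claim.

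Thus the proof consists of a single line: apply Lemma \ref{lem:C00isTC} with $m=0$, noting that $\Ac(\pi)_0 = \Ac(\pi)_{\leq 0}$. No obstacle arises, since the substantive content (stability of dimension of images under short exact sequences, Property D3) has already been packaged into Lemma \ref{lem:C00isTC}. The only reason to state this as a separate lemma is to highlight the particular case that will be used repeatedly in the sequel, namely that the category $\Ac(\pi)_0$ of fiber sheaves on the elliptic threefold $X$ is closed under subobjects, quotients, and extensions in $\Ac_X$.
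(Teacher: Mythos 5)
Your proof is correct and rests on the same substance as the paper's: the paper proves this lemma by citing Property D3 directly, and your route through Lemma \ref{lem:C00isTC} with $m=0$ (using the paper's explicit convention $\Ac(\pi)_0 = \Ac(\pi)_{\leq 0}$) is just that same application of Property D3 packaged one lemma earlier. No gap.
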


\begin{proof}
This follows from Property D3.
\end{proof}

\begin{lem}\label{lem:C00inAcpi0}
$C_{00} \subset \Ac (\pi)_0$.
\end{lem}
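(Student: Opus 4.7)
The plan is to unwind the definitions and apply Property D2 directly. By definition $C_{00} = \Ac_X^{\leq 0}$, the subcategory of coherent sheaves $E$ on $X$ with $\dim E \leq 0$, while $\Ac(\pi)_0 = \Ac(\pi)_{\leq 0}$ consists of those $E \in \Ac_X$ with $\dim (\pi(\supp E)) \leq 0$. So the goal is to show that if $\dim E \leq 0$, then $\dim (\pi(\supp E)) \leq 0$.

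This is exactly the content of the first inequality of Property D2, applied to the Weierstra{\ss} fibration $\pi : X \to B$. Specifically, for any $E \in \Ac_X$ one has $\dim (\pi(\supp E)) \leq \dim E$, so if $E \in C_{00}$, then
\[
\dim (\pi(\supp E)) \leq \dim E \leq 0,
\]
which places $E$ in $\Ac(\pi)_0$. There is no real obstacle here; the lemma is essentially a tautology once the definitions of $C_{00}$ and $\Ac(\pi)_0$ are expanded, and it simply records the fact that the morphism $\pi$ cannot increase dimension of supports. The proof should be a single sentence invoking Property D2.
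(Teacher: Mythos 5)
Your proof is correct and matches the paper's argument exactly: both apply the first inequality of Property D2 to conclude $\dimension(\pi(\supp E)) \leq \dimension E \leq 0$ for $E \in C_{00}$. Nothing to change.
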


\begin{proof}
For any $E \in C_{00}$, we have $\dimension (\pi (\supp E)) \leq 0$ by D2, and so $E \in \Ac (\pi)_0$. 
\end{proof}

\begin{lem}\label{lem:basic1}
If $E \in C_{10}$, then any $\Ac_X$-quotient $E'$ of $E$ lies in $\langle C_{00}, C_{10}\rangle$.
\end{lem}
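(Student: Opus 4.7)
The plan is to use the short exact sequence $0 \to K \to E \to E' \to 0$ in $\Ac_X$ together with the torsion pair associated to $C_{00}$ to split $E'$ into a piece in $C_{00}$ and a piece in $C_{10}$.

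First I would verify that $E'$ already inherits two of the three defining conditions of $C_{10}$. Since $\Ac (\pi)_0$ is a Serre subcategory of $\Ac_X$ by Lemma \ref{lem:Ac_0isTC}, and $E \in \Ac (\pi)_0$, the quotient $E'$ lies in $\Ac (\pi)_0$. Since $(W_{0,\Phi}, W_{1,\Phi})$ is a torsion pair in $\Ac_X$ (Property TC3), the torsion class $W_{0,\Phi}$ is closed under $\Ac_X$-quotients, hence $E' \in W_{0,\Phi}$. Thus $E' \in \Ac (\pi)_0 \cap W_{0,\Phi}$; the only property of $C_{10}$ that might fail for $E'$ is the vanishing $\Hom (C_{00}, E')=0$.

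To remedy this, I would invoke the fact that $C_{00} = \Ac_X^{\leq 0}$ is a Serre subcategory of the noetherian abelian category $\Ac_X$ by Lemma \ref{lem:C00isTC}, and therefore a torsion class by Lemma \ref{lem:Pol}. Let $(C_{00}, C_{00}^\circ)$ be the corresponding torsion pair, and let
\[
0 \to T \to E' \to E'/T \to 0
\]
be the canonical decomposition with $T \in C_{00}$ and $E'/T \in C_{00}^\circ$. Since $E'/T$ is again an $\Ac_X$-quotient of $E$, the first paragraph gives $E'/T \in \Ac (\pi)_0 \cap W_{0,\Phi}$, and by construction $\Hom(C_{00}, E'/T) = 0$, so $E'/T \in C_{10}$.

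Therefore the short exact sequence above exhibits $E'$ as an extension of an object of $C_{10}$ by an object of $C_{00}$, which places $E'$ in the extension closure $\langle C_{00}, C_{10}\rangle$. There is no real obstacle here: the argument is a direct bookkeeping check, and the only subtlety is making sure to invoke Lemma \ref{lem:Pol} (via Lemma \ref{lem:C00isTC}) to legitimize taking the torsion-pair decomposition with respect to $C_{00}$.
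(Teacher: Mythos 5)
Your proof is correct and follows essentially the same strategy as the paper's: decompose $E'$ via the torsion pair associated to $C_{00}$ and check that the torsion-free part lands in $C_{10}$. The only cosmetic difference is that you take the decomposition in $\Ac_X$ while the paper takes it inside $\Ac(\pi)_0$; both are legitimate and lead to the same conclusion.
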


\begin{proof}
Let $E, E'$ be as described.  Since $C_{00} = \Ac_X^{\leq 0}$ is a torsion class in $\Ac_X$ and $C_{00}$ is contained in the abelian subcategory $\Ac (\pi)_0$ of $\Ac_X$, it follows that $C_{00}$ is a torsion class in $\Ac (\pi)_0$.  Hence we have a short exact sequence in $\Ac (\pi)_0$
\[
  0 \to E'_0 \to E' \to E'_1 \to 0
\]
where $E'_0 \in C_{00}$ while $E'_1$ satisfies $\Hom (C_{00},E'_1)=0$.  Since $\Ac (\pi)_0$ and $W_{0,\Phi}$ are both torsion classes in $\Ac_X$, $E_1'$ must also lie in $\Ac (\pi)_0 \cap W_{0,\Phi}$.  Hence $E'_1 \in C_{10}$.
\end{proof}

\begin{lem}\label{lem:C11reformulation}
 $C_{11} = \Phi C_{00}$.
\end{lem}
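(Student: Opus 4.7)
The plan is to prove the set equality by establishing both inclusions directly from the defining conditions of $C_{00}$ and $C_{11}$. The key ingredients will be Property C0 (to pass into the $\Phi$-WIT world), Lemma \ref{lem:W01transform} (to swap WIT indices under transform), Property A3 (to control $\pi$-supports under $\Phi$), Property A2 (to invert $\Phi$ up to shift), and Property A4p together with Property CH1p (to control dimensions via Chern character bookkeeping).

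For the forward inclusion $\Phi(C_{00}) \subseteq C_{11}$, I take $F \in C_{00} = \Ac_X^{\leq 0}$. Property C0 gives $F \in W_{0,\Phi}$, so $\hat{F} := \Phi F$ is a coherent sheaf, and it suffices to check the three defining conditions of $C_{11}$ for $\hat{F}$. First, $\hat{F} \in \Ac(\pi)_0$ follows from Property A3 since $\dim(\pi(\supp F)) \leq 0$. Second, $\hat{F} \in W_{1,\Phi}$ is immediate from Lemma \ref{lem:W01transform}. Third, to see that the $\Phi$-transform of $\hat{F}$ has dimension $0$, I compute Chern characters: since $\codim F = 3$, Property CH1p forces $\ch F$ to vanish outside the $(1,2)$-entry, and applying the transform formula in Property A4p twice, together with the shift formula, shows that $\ch(\widehat{\hat{F}})$ is also supported only in the $(1,2)$-entry, which forces $\codim(\widehat{\hat{F}}) = 3$ by Property CH1p again, hence dimension $0$.

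For the reverse inclusion $C_{11} \subseteq \Phi(C_{00})$, I take $E \in C_{11}$. The defining data supplies a 0-dimensional sheaf $\hat{E}$ with $\Phi E \cong \hat{E}[-1]$, and $\hat{E}$ is automatically in $C_{00}$ by dimension. It remains to exhibit $E$ as $\Phi G$ for some $G \in C_{00}$; the natural candidate is $G = \hat{E}$. Applying $\hat{\Phi}$ to $\Phi E \cong \hat{E}[-1]$ and invoking Property A2 yields $E[-1] \cong (\hat{\Phi}\hat{E})[-1]$, i.e.\ $E \cong \hat{\Phi}\hat{E}$. I then invoke the symmetry between $\Phi$ and $\hat{\Phi}$ mentioned in the paragraph introducing the $C_{ij}$, which identifies $\hat{\Phi}(C_{00})$ with $\Phi(C_{00})$, to conclude $E \in \Phi(C_{00})$.

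The main obstacle is the reverse inclusion. Property A2 is only strong enough to invert $\Phi$ in the direction that produces a $\hat{\Phi}$-transform, so the genuinely delicate step is recognizing that for $0$-dimensional sheaves the $\Phi$-image and the $\hat{\Phi}$-image agree, which is precisely the symmetry of the two relative Fourier--Mukai functors on the product threefold $X = C \times B$. Once this symmetry is in hand, the two inclusions fit together cleanly into the stated equality.
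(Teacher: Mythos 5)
Your proposal is correct and its overall skeleton (check the three defining conditions of $C_{11}$ for $\Phi F$, then invert via Property A2 and the $\Phi$/$\wh{\Phi}$ symmetry for the reverse inclusion) matches the paper's, but it diverges from the paper at one step in a way worth noting. For the condition $\dimension \wh{\wh{F}}=0$ you run a Chern-character computation through Properties CH1p and A4p; the paper instead observes directly from Property A2 that $\wh{\wh{F}} \cong \wh{\Phi}(\Phi F)[1] \cong F$, which is both shorter and strictly stronger (it identifies the double transform, not merely its dimension). More importantly, CH1p and A4p are among the properties flagged as specific to the product threefold $X = C\times B$, so as written your argument only establishes the lemma in that case, whereas the paper's route through A2 is purely functor-theoretic and yields the statement for an arbitrary Weierstra{\ss} elliptic threefold --- a distinction the paper cares about, since it sorts its lemmas by whether the product-specific properties are invoked. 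Two smaller remarks: your assertion that $\Phi F \in W_{1,\Phi}$ ``is immediate from Lemma \ref{lem:W01transform}'' really produces membership in $W_{1,\wh{\Phi}}$, and your final identification of $\wh{\Phi}(C_{00})$ with $\Phi(C_{00})$ is not one of the listed Properties; both steps are covered only by the paper's stated convention that the categories may be defined with $\Phi$ or $\wh{\Phi}$ interchangeably, which is exactly the (equally terse) device the paper's own proof leans on for the reverse inclusion. So the proof stands, but replacing the Chern-character detour with the A2 argument would recover the full generality and economy of the original.
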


\begin{proof}
Since $C_{00} \subset W_{0,\Phi}$ by C0, we have $\Phi C_{00} \subset W_{1,\wh{\Phi}}$ by Lemma \ref{lem:W01transform}.   Since $C_{00} \subset \Ac (\pi)_0$ by Lemma \ref{lem:C00inAcpi0}, we have $\Phi C_{00} \subset \Ac (\pi)_0$ by A3.    Also, for any $E \in C_{00}$, we have $\wh{E} \cong \Phi E$ is $\wh{\Phi}$-WIT$_1$ from above, and so $\wh{\wh{E}} \cong  \wh{\Phi} (\Phi E) [1] \cong E$, i.e.\ $\dimension \wh{\wh{E}}=0$.  We have now shown $\Phi C_{00} \subseteq C_{11}$.  The other inclusion follows from Property C0 and Lemma \ref{lem:W01transform}.
\end{proof}

\begin{lem}\label{lem:C12reformulation}
$C_{12} = \Phi C_{10}$.
\end{lem}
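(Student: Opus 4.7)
The plan is to prove both inclusions $\Phi C_{10}\subseteq C_{12}$ and $C_{12}\subseteq \Phi C_{10}$ in the same template as Lemma \ref{lem:C11reformulation}, with four recurring ingredients: Property A3 to transport the $\pi$-support condition across $\Phi$; Lemma \ref{lem:W01transform} to transport the WIT condition across $\Phi$, using the author's identification of $W_{i,\Phi}$ with $W_{i,\wh{\Phi}}$; Property A2 to obtain $\wh{\wh{E}}\cong E$ and thereby the dimension of $\wh{F}$; and the autoequivalence property of $\Phi$ together with $C_{11}=\Phi C_{00}$ (Lemma \ref{lem:C11reformulation}) to translate between $\Hom(C_{00},-)=0$ and $\Hom(C_{11},-)=0$.

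For the forward inclusion, I would take $E\in C_{10}$ and set $F:=\Phi E=\wh{E}$, which is a sheaf because $E\in W_{0,\Phi}$. Property A3 gives $F\in\Ac(\pi)_0$; Lemma \ref{lem:W01transform} gives $F\in W_{1,\Phi}$ (via the $\wh{\Phi}$-identification). From Property A2 one computes $\wh{\wh{E}}\cong \wh{\Phi}\Phi E[1]\cong E$, so $\dim\wh{F}=\dim E$. To see $\dim E=1$, I would combine $E\in\Ac(\pi)_0$ with the fact that $\pi$ has relative dimension $1$: Property D2 forces $\dim E\leq 1$, and the condition $\Hom(C_{00},E)=0$ rules out $\dim E=0$, since otherwise $E\in C_{00}$ and $\id_E$ would be a nonzero element of $\Hom(C_{00},E)$. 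Finally, for any $G\in C_{00}$, Lemma \ref{lem:C11reformulation} puts $\Phi G$ in $C_{11}$, and
\[
  \Hom_{\Ac_X}(\Phi G,F)=\Hom_{D^b(\Ac_X)}(\Phi G,\Phi E)\cong \Hom_{D^b(\Ac_X)}(G,E)=\Hom_{\Ac_X}(G,E)=0,
\]
so $\Hom(C_{11},F)=0$ and $F\in C_{12}$.

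For the reverse inclusion I would take $F\in C_{12}$ and set $E:=\wh{F}$, so that $E\in\Ac_X$ with $\dim E=1$ by the $C_{12}$-condition. The symmetric form of Lemma \ref{lem:W01transform} puts $E\in W_{0,\Phi}$, and applying $\wh{\Phi}$ to the identity $\Phi F=\wh{F}[-1]=E[-1]$ via Property A2 produces $F\cong \wh{\Phi}E[1]$, which under the $\Phi/\wh{\Phi}$ symmetry identification reads $F\cong\Phi E$. Property A3 gives $E\in\Ac(\pi)_0$, and the same Hom-calculation as above, run in reverse through $C_{11}=\Phi C_{00}$, converts $\Hom(C_{11},F)=0$ into $\Hom(C_{00},E)=0$. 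Hence $E\in C_{10}$ and $F=\Phi E\in\Phi C_{10}$. The main point of care throughout is the bookkeeping between $\Phi$-WIT and $\wh{\Phi}$-WIT conditions: these are a priori distinct, but the paper's convention identifies them under the symmetry of $\Phi$ and $\wh{\Phi}$, and the entire argument closes up cleanly precisely under this identification, as was already used in the proof of Lemma \ref{lem:C11reformulation}.
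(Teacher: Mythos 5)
Your proposal is correct and follows essentially the same route as the paper's proof: both directions use Property A3 and Lemma \ref{lem:W01transform} to transport the $\pi$-support and WIT conditions, Property A2 to get $\wh{\wh{E}}\cong E$ and hence the dimension of the transform, the definition of $C_{10}$ to rule out dimension $0$, and the full faithfulness of $\Phi$ together with Lemma \ref{lem:C11reformulation} to convert between $\Hom(C_{00},-)=0$ and $\Hom(C_{11},-)=0$. Your explicit handling of the $\Phi$-WIT versus $\wh{\Phi}$-WIT bookkeeping matches the paper's stated convention of identifying the two under the symmetry of $\Phi$ and $\wh{\Phi}$.
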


\begin{proof}
By the same argument as  in the proof of Lemma \ref{lem:C11reformulation}, we obtain $\Phi C_{10} \subset \Ac (\pi)_0 \cap W_{1,\wh{\Phi}}$.  Also, if $E \in \Phi C_{10}$ is a nonzero object, then $\wh{E} \in C_{10}$, and by the definition of $C_{10}$, the object $\wh{E}$ cannot be supported in dimension 0, forcing $\dimension \wh{E}=1$.  Now, for any $A \in C_{10}$ and $B \in C_{11}$, we have $\Hom (B,\Phi A) \cong \Hom (\wh{B},A)$ since $\wh{\Phi}$ is an equivalence and by Property A2.  Since $\wh{B} \in C_{00}$, we must have $\Hom (\wh{B},A)=0$ from the definition of $C_{10}$.  Hence $\Hom (B,\Phi A)=0$, and we have shown $\Phi C_{10} \subseteq C_{12}$.

To see the other inclusion,  take any $E \in C_{12}$.  The same argument as above shows that $\wh{E} \in \Ac (\pi)_0 \cap W_{0,\wh{\Phi}}$; we also have  $\dimension \wh{E}=1$ from the definition of $C_{12}$.  It remains to show $\Hom (C_{00},\wh{E})=0$.  Suppose we have a  morphism $\alpha : A \to \wh{E}$ for some $A \in C_{00}$.  Since $A$ and $\wh{E}$ are both $\wh{\Phi}$-WIT$_0$, the functor $\wh{\Phi}$ takes $\alpha$ to the  morphism $\wh{\Phi} \alpha : \wh{A} \to E$.  Now we have  $\wh{A} \in C_{11}$ by Lemma \ref{lem:C11reformulation}, and so $\wh{\Phi}\alpha$ and hence $\alpha$ must be zero from the definition of $C_{12}$.  This completes the proof of the lemma.
\end{proof}

\begin{lem}\label{lem:C00C20isTC}
$\langle C_{00}, C_{20}\rangle$ is a torsion class in $\Ac_X$.
\end{lem}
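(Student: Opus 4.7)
The plan is to invoke Lemma \ref{lem:Pol}: since $\Ac_X$ is noetherian, it suffices to verify that $\langle C_{00}, C_{20}\rangle$ is closed under extensions (automatic, as $\langle C_{00}, C_{20}\rangle$ is by construction an extension closure) and closed under $\Ac_X$-quotients.  To handle the latter, I would first identify $\langle C_{00}, C_{20}\rangle$ concretely: let $\mathcal{D}$ denote the full subcategory of $\Ac_X$ consisting of coherent sheaves $E$ with $\dimension E \leq 1$ such that every $1$-dimensional irreducible component of $\supp(E)$ is of type (b) in the sense of Property Z2.

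To show $\langle C_{00}, C_{20}\rangle = \mathcal{D}$: for $\mathcal{D} \subseteq \langle C_{00}, C_{20}\rangle$, note that an object $E \in \mathcal{D}$ with $\dimension E = 0$ lies in $C_{00} = \Ac_X^{\leq 0}$, while an object with $\dimension E = 1$ lies in $C_{20} = \Ac^{\leq 1}_h$ directly from the definition in \ref{para:horizontalsheaves}.  For the reverse inclusion, I would induct on the length of an extension filtration: if $0 \to A \to E \to B \to 0$ with $A, B \in \mathcal{D}$, then Property D1 gives $\supp(E) = \supp(A) \cup \supp(B)$, so $\dimension E \leq 1$; moreover, any $1$-dimensional irreducible component $W$ of $\supp(E)$ is contained in $\supp(A)$ or $\supp(B)$ (since $W$ is irreducible), and by the standard topological fact that an irreducible closed subset of dimension $d$ contained in another irreducible closed subset of dimension $d$ must coincide with it, $W$ is a $1$-dimensional irreducible component of $\supp(A)$ or $\supp(B)$, hence horizontal.

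For quotient-closure of $\mathcal{D}$: given $E \in \mathcal{D}$ and a surjection $E \twoheadrightarrow Q$ in $\Ac_X$, applying Property D1 to the short exact sequence $0 \to \kernel \to E \to Q \to 0$ yields $\supp(Q) \subseteq \supp(E)$, so $\dimension Q \leq 1$.  For each $1$-dimensional irreducible component $W$ of $\supp(Q)$, the inclusion $W \subseteq \supp(E)$ places $W$ inside some irreducible component $W'$ of $\supp(E)$; since $\dimension W = 1$ and $\dimension W' \leq 1$, the same topological observation as above gives $W = W'$, so $W$ is horizontal by the hypothesis on $E$.  Hence $Q \in \mathcal{D}$, and Lemma \ref{lem:Pol} finishes the proof.

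The main obstacle I anticipate is the handling of $1$-dimensional irreducible components cleanly — specifically, that components of $\supp(Q)$ correspond to genuine components of $\supp(E)$ rather than merely proper closed subvarieties of them — which requires invoking a standard topological fact about irreducible closed subsets of equal dimension that is implicit in Property D0 but not spelled out in the Properties list.
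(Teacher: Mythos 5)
Your proof is correct, and its computational core is exactly the paper's: $\supp(E'') \subseteq \supp(E)$ by Property D1, so the one-dimensional irreducible components of the support of a quotient remain of type (b). The scaffolding differs slightly. The paper never identifies $\langle C_{00}, C_{20}\rangle$ explicitly; instead it uses that $C_{00}$ is already a torsion class (Lemma \ref{lem:C00isTC}) to reduce the entire verification to showing that any $\Ac_X$-quotient $E''$ of an object $E \in C_{20}$ lies in $C_{00} \cup C_{20}$ --- the mechanism later formalized as Lemma \ref{Bill} --- which spares it both the induction on extension length and the explicit description of the closure. Your identification $\langle C_{00}, C_{20}\rangle = \mathcal{D}$ is a correct addition not present in the paper, and it buys a cleaner picture of what this torsion class actually is (note it relies on the formal definition of $\Ac^{\leq 1}_h$ in \ref{para:horizontalsheaves}, which does not require purity, rather than the informal gloss of $C_{20}$ as pure sheaves). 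Finally, the ``topological fact'' you flag --- that a one-dimensional irreducible closed subset of $\supp(E)$ is an actual irreducible component rather than a proper closed subset of one --- is used just as silently in the paper's own proof, so it is not a gap relative to the paper; if you want to avoid it altogether, observe instead that any closed subset of a type (b) curve meets each fiber $p^{-1}(b)$ in finitely many points, so containment in a horizontal component already forces type (b).
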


\begin{proof}
We already know $C_{00}$ is a torsion class from Lemma \ref{lem:C00isTC}.  Therefore,  it suffices to take an arbitrary $E \in C_{20}$ and any $\Ac_X$-surjection $E \twoheadrightarrow E''$, and show that  $E''$ lies in $\langle C_{00}, C_{20}\rangle$.  If $\dimension E'' = 0$, then $E'' \in C_{00}$; so let us assume $\dimension E'' = 1$.

Since $E \in C_{20}=\Ac^{\leq 1}_h$, all the 1-dimensional irreducible components of $\supp (E)$ are of type (b) in Z2.  Since $\supp (E'') \subseteq \supp (E)$ by Property D1, the 1-dimensional irreducible components of $\supp (E'')$ are also of type (b) in Z2, i.e.\ $E'' \in \Ac^{\leq 1}_h=C_{20}$. 
\end{proof}


\begin{defn}\label{Jazz}
If a pair of integers $(i,j)$ is not part of the collection \eqref{eq:ijindex}, we define $C_{ij}$ to be empty.  For $n=0,2,4$, we define 
\[
  \Tc_{n0} = \langle C_{ij} : i \leq n, 0 \leq j \leq 2 \rangle.
\]
For $n= 1, 3, 5$, we define 
\begin{align*}
  \Tc_{n0} &= \langle \Tc_{(n-1),0}, C_{n0} \rangle, \\
  \Tc_{n1} &= \langle \Tc_{n0}, C_{n1} \rangle, \\
  \Tc_{n2} &= \langle \Tc_{n1}, C_{n2} \rangle.
\end{align*}
\end{defn}

\begin{eg}
\[
\Tc_{20} = \biggg< \begin{matrix} C_{00} & C_{10} & C_{20} \\
& C_{11} & \\
& C_{12} & \end{matrix}
\biggg> \text{\quad and \quad} \Tc_{31} = 
\biggg< \begin{matrix} C_{00} & C_{10} & C_{20} & C_{30}\\
& C_{11} &&  C_{31} \\
& C_{12} &  & \end{matrix}
\biggg>.
\]
\end{eg}

\begin{defn}\label{Jonathan}
For $2 \leq i \leq 5$, we  define $\mathcal{F}_i=\langle C_{00}, C_{10},\ldots, C_{i0} \rangle$.
\end{defn}

\begin{lem}\label{Gabriel}
Suppose we have $\Ac_X$-short exact sequences 
\begin{gather*}
 0 \to K \to A \to Q \to 0 \\
  0 \to Q_0 \to Q \to Q_1 \to 0 
\end{gather*}
where  $Q_i \in W_{i,\Phi}$ for $i=0,1$.  If $A$ is $\Phi$-WIT$_1$, then  there exists an $\Ac_X$-surjection  $\wh{A} \twoheadrightarrow \wh{Q}_1$.
\end{lem}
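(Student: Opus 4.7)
The plan is to derive the surjection from two long exact sequences of cohomology obtained by applying $\Phi$ to the two given short exact sequences, using that $\Phi$ of any sheaf is concentrated in degrees $0$ and $1$ (Property A1).

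First, I would apply $\Phi$ to the exact triangle $K \to A \to Q \to K[1]$ in $D^b(\Ac_X)$ coming from the first short exact sequence, and take the associated long exact sequence of cohomology. By Property A1, this reduces to
\[
0 \to H^0(\Phi K) \to H^0(\Phi A) \to H^0(\Phi Q) \to H^1(\Phi K) \to H^1(\Phi A) \to H^1(\Phi Q) \to 0.
\]
Since $A$ is $\Phi$-WIT$_1$, we have $H^0(\Phi A)=0$ and $H^1(\Phi A)\cong \wh{A}$. The right portion of the sequence then yields an $\Ac_X$-surjection $\wh{A} \twoheadrightarrow H^1(\Phi Q)$.

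Next, I would apply $\Phi$ to the second short exact sequence and take its long exact sequence of cohomology. Since $Q_0 \in W_{0,\Phi}$, we have $H^1(\Phi Q_0) = 0$, and since $Q_1 \in W_{1,\Phi}$, we have $H^0(\Phi Q_1)=0$ and $H^1(\Phi Q_1) \cong \wh{Q}_1$. Plugging these into the long exact sequence gives
\[
0 \to \wh{Q}_0 \to H^0(\Phi Q) \to 0 \to 0 \to H^1(\Phi Q) \to \wh{Q}_1 \to 0,
\]
so in particular there is an isomorphism $H^1(\Phi Q) \cong \wh{Q}_1$ in $\Ac_X$.

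Composing the surjection $\wh{A} \twoheadrightarrow H^1(\Phi Q)$ from the first step with the isomorphism $H^1(\Phi Q)\cong \wh{Q}_1$ from the second step gives the desired $\Ac_X$-surjection $\wh{A} \twoheadrightarrow \wh{Q}_1$. No real obstacle is expected here; the only thing to be careful about is bookkeeping with the WIT indices and confirming via Property A4 that the cohomology objects assemble correctly into sheaves in $\Ac_X$ (so that the maps produced from the long exact sequence are genuine $\Ac_X$-morphisms).
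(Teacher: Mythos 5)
Your argument is correct and complete. One small remark: the paper actually states Lemma \ref{Gabriel} without supplying a proof, so there is nothing to compare against; but your proof is exactly the natural one and stays entirely within the paper's framework, using only Property A1, the long exact cohomology sequence of an exact triangle, and the definition of $\Phi$-WIT$_i$. The two long exact sequences give the surjection $\wh{A}\cong H^1(\Phi A)\twoheadrightarrow H^1(\Phi Q)$ (since $A\in W_{1,\Phi}$ forces $H^1(\Phi A)\to H^1(\Phi Q)\to 0$ to be the tail of the sequence) and the isomorphism $H^1(\Phi Q)\cong \wh{Q}_1$ (since $H^1(\Phi Q_0)=0$ and $H^0(\Phi Q_1)=0$), and composing them is all that is needed; in fact only the surjectivity of $H^1(\Phi Q)\to \wh{Q}_1$ is required, so your isomorphism is slightly more than necessary but harmless.
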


\begin{lem}\label{Bill}
Suppose $\Tc, \mathcal{C}$ are subcategories of $\Ac_X$, with $\Tc$ being a torsion class  in $\Ac_X$.  Suppose that for every $A \in \mathcal{C}$ and every $\Ac_X$-quotient $A \twoheadrightarrow A'$, we have $A' \in \langle \Tc, \mathcal{C}\rangle$.  Then $\langle \Tc, \mathcal{C}\rangle$ is a torsion class in $\Ac_X$.
\end{lem}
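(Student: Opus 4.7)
The plan is to invoke Lemma \ref{lem:Pol}: since $\Ac_X = \Coh(X)$ is noetherian, it suffices to show that $\langle \Tc, \mathcal{C}\rangle$ is both extension-closed and closed under $\Ac_X$-quotients. Extension-closure is immediate, since $\langle \Tc, \mathcal{C}\rangle$ is by definition the smallest extension-closed full subcategory of $\Ac_X$ containing $\Tc \cup \mathcal{C}$. Thus the entire content of the lemma is to prove closure under quotients.

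For that, I would argue by induction on the length $n$ of an ``extension filtration'' witnessing membership in $\langle \Tc, \mathcal{C}\rangle$, i.e.\ a finite filtration $0 = E_0 \subseteq E_1 \subseteq \cdots \subseteq E_n = E$ whose factors $E_k/E_{k-1}$ each lie in $\Tc \cup \mathcal{C}$. For the base case $n=1$, an object $E$ either lies in $\Tc$, in which case any $\Ac_X$-quotient of $E$ still lies in $\Tc \subseteq \langle \Tc, \mathcal{C}\rangle$ because $\Tc$ is a torsion class, or lies in $\mathcal{C}$, in which case any $\Ac_X$-quotient lies in $\langle \Tc, \mathcal{C}\rangle$ by the hypothesis on $\mathcal{C}$.

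For the inductive step, assume the claim for filtrations of length less than $n$, and suppose $E$ sits in a short exact sequence
\[
 0 \to E' \to E \to E'' \to 0
\]
with $E', E'' \in \langle \Tc, \mathcal{C}\rangle$ admitting filtrations of length at most $n-1$. Given any $\Ac_X$-surjection $q : E \twoheadrightarrow Q$, let $K$ be the image of the composite $E' \hookrightarrow E \twoheadrightarrow Q$. Then $K$ is a quotient of $E'$, and $Q/K$ is a quotient of $E''$ (the usual snake-lemma diagram chase), so by the inductive hypothesis both $K$ and $Q/K$ lie in $\langle \Tc, \mathcal{C}\rangle$. The short exact sequence $0 \to K \to Q \to Q/K \to 0$ together with extension-closure of $\langle \Tc, \mathcal{C}\rangle$ then gives $Q \in \langle \Tc, \mathcal{C}\rangle$.

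There is no real obstacle here; the only subtlety is that one must track the quotient of the subobject $E'$ (rather than $E'$ itself) in order to recognize it as a quotient of an object with a shorter filtration, so that the inductive hypothesis applies. With closure under quotients and extensions established, Lemma \ref{lem:Pol} produces the desired torsion class.
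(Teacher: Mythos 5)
Your proof is correct. The paper actually states Lemma \ref{Bill} without supplying a proof, so there is nothing to compare against; your argument --- reduce to quotient-closure via Lemma \ref{lem:Pol} and noetherianity of $\Coh(X)$, then induct on the length of a filtration with factors in $\Tc \cup \mathcal{C}$, handling the base case by quotient-closure of the torsion class $\Tc$ and the hypothesis on $\mathcal{C}$, and the inductive step by splitting a quotient $Q$ of $E$ into the image $K$ of $E'$ and the quotient $Q/K$ of $E''$ --- is exactly the standard argument one would expect the authors to have in mind, and every step checks out.
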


\begin{lem} \label{Eddie}
Suppose $1 \leq m \leq 3$ is an integer and  $E\in \mathcal{A}^m(\pi)_{m-1} \cap W_{0,\Phi}$. For for any  $\Ac_X$-quotient $E \twoheadrightarrow E'$ where  $\dim(E')=m$, we have  $E' \in \mathcal{A}^m(\pi)_{m-1} \cap W_{0,\Phi}$
\end{lem}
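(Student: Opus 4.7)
The plan is to take an arbitrary short exact sequence $0 \to K \to E \to E' \to 0$ in $\Ac_X$ realizing $E'$ as the quotient, and then separately verify the two conditions that define $\mathcal{A}^m(\pi)_{m-1} \cap W_{0,\Phi}$: namely, that $E'$ is $\Phi$-WIT$_0$, and that $\dim(\pi(\supp E')) = m-1$.

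For the $\Phi$-WIT$_0$ condition, I would appeal directly to Property TC3, which says that $(W_{0,\Phi}, W_{1,\Phi})$ is a torsion pair in $\Ac_X$. As recorded in Section~\ref{sec:prelim}, the torsion class in any torsion pair is closed under taking quotients in $\Ac_X$. Since $E \in W_{0,\Phi}$ by hypothesis and $E'$ is an $\Ac_X$-quotient of $E$, this immediately gives $E' \in W_{0,\Phi}$.

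For the support statement, I would use Properties D2 and D3. On the one hand, Property D3 applied to the short exact sequence $0 \to K \to E \to E' \to 0$ yields
\[
m-1 = \dim(\pi(\supp E)) = \max\{\dim(\pi(\supp K)),\, \dim(\pi(\supp E'))\},
\]
so in particular $\dim(\pi(\supp E')) \leq m-1$. On the other hand, because $\pi$ is flat of relative dimension $1$ (see~\ref{para:ellip3folddef}), the second half of Property D2 applied to $E'$ gives
\[
\dim E' - \dim(\pi(\supp E')) \leq 1,
\]
and since $\dim E' = m$ by assumption this forces $\dim(\pi(\supp E')) \geq m-1$. Combining the two inequalities produces the equality $\dim(\pi(\supp E')) = m-1$, so $E' \in \mathcal{A}^m(\pi)_{m-1}$.

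There is not really a difficult step here; the argument is a direct application of the axiomatic Properties from Section~\ref{se:propertieslist}. If anything, the only subtlety worth flagging is the use of flatness of $\pi$ in the lower bound for $\dim(\pi(\supp E'))$ via Property D2---without the relative dimension $1$ input, one would only get the upper bound from D3 and could not rule out a drop in fiber dimension.
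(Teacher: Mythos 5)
Your proposal is correct and follows essentially the same route as the paper's own proof: quotient-closure of $W_{0,\Phi}$ from Property TC3, the upper bound $\dimension(\pi(\supp E'))\leq m-1$ from Property D3, and the lower bound from the flat relative dimension $1$ part of Property D2. No issues.
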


\begin{proof}
Let $E,E'$ be as described. From Property TC3, we know $W_{0,\Phi}$ is a torsion class in $\Ac_X$, and so  $E' \in W_{0,\Phi}$.  Since $\dim(E')= m$, we know $\dimension (\pi (\supp E'))$ is either $m-1$ or $m$ by Property D2.  Property D3, however, gives that $\dimension (\pi (\supp  E')) \leq m-1$, and so $\dimension (\pi (\supp E')) = m-1$ and we are done.
\end{proof}

\begin{lem}\label{Jade}
$\mathcal{A}(\pi)_0 \cap W_{0,\Phi} = \mathcal{T}_{10}$, and it is a torsion class in $\Ac_X$.
\end{lem}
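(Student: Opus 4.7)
The plan is to first unpack $\mathcal{T}_{10}$: by Definition \ref{Jazz}, $\mathcal{T}_{00} = \langle C_{ij} : i \leq 0 \rangle = \langle C_{00}\rangle$, so $\mathcal{T}_{10} = \langle C_{00}, C_{10}\rangle$. Then I would prove the two inclusions separately and finish with the torsion-class assertion.

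For $\mathcal{T}_{10} \subseteq \Ac(\pi)_0 \cap W_{0,\Phi}$: I would observe that $C_{10}$ sits inside $\Ac(\pi)_0 \cap W_{0,\Phi}$ by definition, while $C_{00}$ does too (it is contained in $\Ac(\pi)_0$ by Lemma \ref{lem:C00inAcpi0}, and in $W_{0,\Phi}$ by Property C0). Since $\Ac(\pi)_0$ (Lemma \ref{lem:Ac_0isTC}) and $W_{0,\Phi}$ (Property TC3) are both extension-closed in $\Ac_X$, so is their intersection, and the extension closure $\langle C_{00}, C_{10}\rangle$ lands inside it.

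For the reverse inclusion $\Ac(\pi)_0 \cap W_{0,\Phi} \subseteq \mathcal{T}_{10}$: given $E \in \Ac(\pi)_0 \cap W_{0,\Phi}$, I would use that $C_{00} = \Ac_X^{\leq 0}$ is a torsion class in $\Ac_X$ (by Property TC1 and Lemma \ref{lem:Pol}) to produce a short exact sequence $0 \to E_0 \to E \to E_1 \to 0$ with $E_0 \in C_{00}$ and $\Hom(C_{00}, E_1) = 0$. The quotient $E_1$ inherits membership in $W_{0,\Phi}$ (torsion classes are quotient-closed) and in $\Ac(\pi)_0$ (which is Serre by Lemma \ref{lem:Ac_0isTC}). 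Combined with $\Hom(C_{00}, E_1) = 0$, this places $E_1 \in C_{10}$, so $E$ is an extension of an object of $C_{10}$ by an object of $C_{00}$, hence lies in $\mathcal{T}_{10}$.

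Finally, to verify that $\mathcal{T}_{10}$ is a torsion class in $\Ac_X$, I would appeal to Lemma \ref{Bill} with $\Tc = C_{00}$ (a torsion class) and $\mathcal{C} = C_{10}$: the hypothesis of Lemma \ref{Bill} is exactly Lemma \ref{lem:basic1}, which says any $\Ac_X$-quotient of an object of $C_{10}$ lies in $\langle C_{00}, C_{10}\rangle$. Alternatively, using the first part of the lemma already proved, $\mathcal{T}_{10} = \Ac(\pi)_0 \cap W_{0,\Phi}$ is an intersection of two quotient- and extension-closed subcategories of the noetherian category $\Ac_X$, so Lemma \ref{lem:Pol} applies directly. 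No step looks like a serious obstacle; the only mild care needed is in arguing that the quotient $E_1$ still satisfies $\Hom(C_{00}, E_1) = 0$ together with the two stability-type conditions, but this is immediate from the torsion pair construction.
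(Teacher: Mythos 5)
Your proposal is correct and follows essentially the same route as the paper: both inclusions are established the same way (the only cosmetic difference being that you take the $C_{00}$-torsion decomposition in $\Ac_X$ rather than inside the Serre subcategory $\Ac(\pi)_0$, which is equivalent), and your second option for the torsion-class assertion --- that $\Ac(\pi)_0 \cap W_{0,\Phi}$ is an intersection of quotient- and extension-closed subcategories --- is exactly the paper's argument. Your first option via Lemma \ref{Bill} and Lemma \ref{lem:basic1} is a valid alternative, but not needed.
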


\begin{proof}
We know $\Ac (\pi)_0$ and $W_{0,\Phi}$ are both torsion classes in $\Ac_X$ from Lemma \ref{lem:Ac_0isTC} and Property TC3.  That the intersection of two torsion classes is again a torsion class follows immediately from the definition of a torsion class.  

Recall that $\mathcal{T}_{10}= \langle C_{00},C_{10}\rangle$.  We have $C_{10} \subseteq \Ac (\pi)_0 \cap W_{0,\Phi}$ by definition, and $C_{00} \subseteq \Ac (\pi)_0 \cap W_{0,\Phi}$ by Properties D2 and C0.

To see the other inclusion, take any $E \in \mathcal{A}(\pi)_0 \cap W_{0,\Phi}$. Since $\Ac (\pi)_0$ is a Serre subcategory of $\Ac_X$, that  $C_{00}$ is a torsion class in $\Ac_X$ (Lemma \ref{lem:C00isTC}) implies $C_{00}$ is also a torsion class in the abelian category $\Ac (\pi)_0$; let $\Fc$ denote the corresponding torsion-free class in $\Ac (\pi)_0$.  Then we have an $\Ac (\pi)_0$-short exact sequence
\[
0 \to E' \to E \to E'' \to 0
\]
where $E' \in C_{00}$ and $E'' \in \Fc$.  That $\Ac (\pi)_0 \cap W_{0,\Phi}$ is a torsion class in $\Ac_X$ (hence in $\Ac (\pi)_0$) implies $E'' \in \Ac (\pi)_0 \cap W_{0,\Phi}$.  Since $\Hom (C_{00},E'')=0$ by construction, we have $E'' \in C_{10}$, giving us $E \in \Tc_{10}$.  This completes the proof of the lemma.
\end{proof}

\begin{lem}\label{Lucas}
$\mathcal{T}_{11}$ is a torsion class in $\Ac_X^{\leq1}$.
\end{lem}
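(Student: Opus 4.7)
The plan is to produce $\Tc_{11}$ as a torsion class in $\Ac_X$ (which then restricts to a torsion class in the Serre subcategory $\Ac_X^{\leq 1}$ since $\Tc_{11}\subseteq \Ac(\pi)_0 \subseteq \Ac_X^{\leq 1}$) by invoking Lemma \ref{Bill} with $\Tc = \Tc_{10}$ and $\mathcal{C} = C_{11}$. Lemma \ref{Jade} gives that $\Tc_{10}$ is already a torsion class in $\Ac_X$, so the only thing that requires work is verifying the hypothesis of Lemma \ref{Bill}: that every $\Ac_X$-quotient of an object of $C_{11}$ lies in $\Tc_{11} = \langle \Tc_{10}, C_{11}\rangle$.

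To that end, fix $A \in C_{11}$ and an $\Ac_X$-surjection $A \twoheadrightarrow A'$ with kernel $K$. Since $A \in \Ac(\pi)_0$ and $\Ac(\pi)_0$ is a Serre subcategory by Lemma \ref{lem:Ac_0isTC}, we have $A' \in \Ac(\pi)_0$. Using the torsion pair $(W_{0,\Phi}, W_{1,\Phi})$ of Property TC3, I would decompose
\[
0 \to A'_0 \to A' \to A'_1 \to 0
\]
with $A'_0 \in W_{0,\Phi}$ and $A'_1 \in W_{1,\Phi}$. The Serre property of $\Ac(\pi)_0$ forces $A'_0, A'_1 \in \Ac(\pi)_0$, so by Lemma \ref{Jade} we already get $A'_0 \in \Ac(\pi)_0 \cap W_{0,\Phi} = \Tc_{10}$.

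The main work is to show $A'_1 \in C_{11}$, i.e.\ that $\dimension \wh{A'_1} = 0$. This is the step where the Fourier--Mukai machinery is essential, and it is handled by Lemma \ref{Gabriel}: applied to $0 \to K \to A \to A' \to 0$ together with the decomposition of $A'$ above, and using that $A$ is $\Phi$-WIT$_1$ (since $A \in C_{11} \subseteq W_{1,\Phi}$), it yields an $\Ac_X$-surjection $\wh{A} \twoheadrightarrow \wh{A'_1}$. Because $A \in C_{11}$ means $\wh{A} \in C_{00} = \Ac_X^{\leq 0}$, Property D1 (dimension is non-increasing under quotient) gives $\dimension \wh{A'_1} \leq 0$, i.e.\ $A'_1 \in C_{11}$. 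Consequently $A'$ sits in an extension of $A'_1 \in C_{11}$ by $A'_0 \in \Tc_{10}$, so $A' \in \Tc_{11}$.

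With the hypothesis of Lemma \ref{Bill} verified, that lemma immediately produces $\Tc_{11}$ as a torsion class in $\Ac_X$. Restricting to $\Ac_X^{\leq 1}$: for any $E \in \Ac_X^{\leq 1}$, the $\Ac_X$-torsion decomposition $0 \to E' \to E \to E'' \to 0$ of $E$ with respect to $\Tc_{11}$ has $E' \in \Tc_{11} \subseteq \Ac_X^{\leq 1}$, so $E'' \in \Ac_X^{\leq 1}$ by the Serre property of $\Ac_X^{\leq 1}$ (Lemma \ref{lem:C00isTC}). The required $\Hom$-vanishing is inherited for free, so $\Tc_{11}$ is a torsion class in $\Ac_X^{\leq 1}$ as claimed. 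The only non-routine ingredient is the invocation of Lemma \ref{Gabriel} in the third step; everything else is bookkeeping with Serre subcategories and the torsion pair $(W_{0,\Phi}, W_{1,\Phi})$.
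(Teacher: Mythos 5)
Your proof is correct and follows essentially the same route as the paper's: reduce to quotients of objects of $C_{11}$ via Lemma \ref{Bill}, split such a quotient into a piece of $\Tc_{10}$ and a piece of $\Ac(\pi)_0\cap W_{1,\Phi}$, and use Lemma \ref{Gabriel} together with $\dimension \wh{E}=0$ to place the latter piece in $C_{11}$. The only cosmetic difference is that you produce the decomposition from the torsion pair $(W_{0,\Phi},W_{1,\Phi})$ and then invoke the Serre property of $\Ac(\pi)_0$, whereas the paper decomposes with respect to $(\Tc_{10},\Tc_{10}^{\circ})$ inside $\Ac_X^{\leq 1}$ and then checks the torsion-free part is $\Phi$-WIT$_1$; since the quotient already lies in $\Ac(\pi)_0$, the two decompositions coincide.
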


\begin{proof} 
Recall that $\mathcal{T}_{11}= \langle \mathcal{T}_{10}, C_{11}\rangle$.  Take any  $E \in C_{11}$ and any $\Ac_X$-quotient $E \twoheadrightarrow E'$.  By Lemma \ref{Jade}, $\Ac (\pi)_0 \cap W_{0,\Phi}$ is a torsion class in $\Ac_X$, hence in $\Ac^{\leq 1}_X$; let $\Fc$ be the corresponding torsion free class in $\Ac^{\leq 1}_X$.  Then we have an $\Ac^{\leq 1}_X$-short exact sequence
\[
 0 \to A_0 \to E' \to A_1 \to 0
\]
where $A_0 \in \Ac(\pi)_0 \cap W_{0,\Phi}=\Tc_{10}$ and $A_1 \in \Fc$.  By Lemmas \ref{Jade} and \ref{Bill}, it suffices for us to show  $A_1 \in \Tc_{11}$.  That $\Ac (\pi)_0$ is a torsion class in $\Ac_X$ (Lemma \ref{lem:Ac_0isTC}) implies $A_1 \in \Ac (\pi)_0$.  On the other hand, if $A_{10}$ denotes the $\Phi$-WIT$_0$ part of $A_{10}$ in $\Ac_X$, then $A_{10} \in \Ac (\pi)_0$ by Lemma \ref{lem:Ac_0isTC} again, i.e.\ $A_{10} \in \Ac (\pi)_0 \cap W_{0,\Phi}$, and then $A_{10}=0$ by construction of $A_1$.  That is, we have $A_1 \in \Ac (\pi)_0 \cap W_{1,\Phi}$.  By Lemma \ref{Gabriel}, we now have an $\Ac_X$-surjection $\wh{E} \to \wh{A_1}$.  Since $\dimension \wh{E}=0$ from the definition of $C_{11}$, we have $\dimension \wh{A_1}=0$ by Lemma \ref{lem:C00isTC}.  This shows $A_1 \in C_{11}$, and we are done.
\end{proof}

\begin{lem}\label{Sarah}
$\Ac (\pi)_0 = \Tc_{12}$ is a Serre subcategory of $\Ac_X$, and hence a torsion class in $\Ac_X$.
\end{lem}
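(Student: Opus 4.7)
The statement has two parts: (i) $\Ac(\pi)_0$ is Serre in $\Ac_X$ and (ii) $\Ac(\pi)_0 = \Tc_{12}$. Part (i) is immediate from Lemma \ref{lem:Ac_0isTC}, and then Lemma \ref{lem:Pol} promotes it to a torsion class. So the real work is the equality of categories.

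For the inclusion $\Tc_{12} \subseteq \Ac(\pi)_0$, I would just check that each of the four building blocks $C_{00}, C_{10}, C_{11}, C_{12}$ lies in $\Ac(\pi)_0$ (via Lemma \ref{lem:C00inAcpi0} for $C_{00}$ and directly from the defining conditions for the other three) and invoke the fact that $\Ac(\pi)_0$ is extension-closed.

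The forward inclusion $\Ac(\pi)_0 \subseteq \Tc_{12}$ is the crux. Given $E \in \Ac(\pi)_0$, the plan is to apply the torsion pair $(W_{0,\Phi}, W_{1,\Phi})$ from Property TC3 to obtain a short exact sequence
\[
0 \to E_0 \to E \to E_1 \to 0
\]
with $E_0 \in W_{0,\Phi}$ and $E_1 \in W_{1,\Phi}$. Since $\Ac(\pi)_0$ is Serre, both $E_0, E_1 \in \Ac(\pi)_0$. Lemma \ref{Jade} immediately gives $E_0 \in \Tc_{10} \subseteq \Tc_{12}$, reducing the task to showing $E_1 \in \langle C_{11}, C_{12}\rangle$.

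To handle $E_1$, I would pass to $\wh{E_1}$: by Lemma \ref{lem:W01transform} it is $\wh{\Phi}$-WIT$_0$, and by Property A3 it still lives in $\Ac(\pi)_0$. Inside the Serre subcategory $\Ac(\pi)_0 \cap W_{0,\wh{\Phi}}$ (a torsion class by the $\wh{\Phi}$-analog of Lemma \ref{Jade}), the Serre subcategory $C_{00}$ cuts out a short exact sequence
\[
0 \to A \to \wh{E_1} \to B \to 0
\]
with $A \in C_{00}$ and $B$ lying in the $\wh{\Phi}$-analog of $C_{10}$. Applying $\wh{\Phi}$ to this sequence (legitimate because all three terms are $\wh{\Phi}$-WIT$_0$, so the resulting triangle in $D^b(\Ac_X)$ has cohomology only in degree $0$) produces a short exact sequence in $\Ac_X$
\[
0 \to \wh{\Phi}A \to E_1 \to \wh{\Phi}B \to 0.
\]
Now Lemma \ref{lem:C11reformulation} identifies $\wh{\Phi}A \in C_{11}$, and the analog of Lemma \ref{lem:C12reformulation} identifies $\wh{\Phi}B \in C_{12}$, placing $E_1$ in $\langle C_{11}, C_{12}\rangle$ as required, and hence $E \in \Tc_{12}$.

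The main obstacle I anticipate is the bookkeeping around the symmetry between $\Phi$ and $\wh{\Phi}$: I am quietly using $\wh{\Phi}$-versions of Property C0, Lemma \ref{Jade}, and Lemmas \ref{lem:C11reformulation}--\ref{lem:C12reformulation}, which the paper justifies by declaring that all of these constructions are symmetric in the two functors. The other subtle point is exactness of $\wh{\Phi}$ on the short exact sequence of $\wh{\Phi}$-WIT$_0$ sheaves, but this is a standard consequence of the long exact cohomology sequence associated with the induced exact triangle in $D^b(\Ac_X)$.
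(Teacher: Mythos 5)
Your proposal is correct, but for the crucial inclusion $\Ac (\pi)_0 \subseteq \Tc_{12}$ it takes a genuinely different route from the paper. The paper first establishes that $\Tc_{11}$ is a torsion class in $\Ac_X^{\leq 1}$ (Lemma \ref{Lucas}, which itself leans on Lemma \ref{Gabriel}), uses it to write $0 \to E' \to E \to E'' \to 0$ with $E' \in \Tc_{11}$ and $\Hom (\Tc_{11}, E'')=0$, deduces $E'' \in \Ac (\pi)_0 \cap W_{1,\Phi}$ from the vanishing $\Hom(\Tc_{10},E'')=0$ together with Lemma \ref{Jade}, and then places $E''$ in $C_{11}$ or $C_{12}$ by a dichotomy on $\dimension \wh{E''}$, using $\Hom(C_{11},E'')=0$ to verify membership in $C_{12}$. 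You instead decompose $E$ only by the torsion pair $(\Wo,\Wi)$, pass to the transform $\wh{E_1}$, split it there by the $C_{00}$ torsion pair, and transform back, identifying the two pieces via the reformulation Lemmas \ref{lem:C11reformulation} and \ref{lem:C12reformulation} (in their $\wh{\Phi}$-symmetric forms). Your argument bypasses Lemma \ref{Lucas} entirely and actually proves something slightly more structured --- that every object of $\Ac(\pi)_0 \cap \Wi$ is an extension of a $C_{12}$-object by a $C_{11}$-object --- at the cost of invoking the exactness of $\wh{\Phi}$ on short exact sequences of $\wh{\Phi}$-WIT$_0$ sheaves (which you justify correctly via Property A1 and the long exact cohomology sequence) and the $\Phi \leftrightarrow \wh{\Phi}$ symmetry convention, which you flag explicitly and which the paper does license. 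The paper's route is shorter given that Lemma \ref{Lucas} is needed elsewhere anyway; yours is more self-contained relative to the reformulation lemmas. Both are valid.
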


\begin{proof}
We know that $\Ac (\pi)_0$ is a Serre subcategory of $\Ac_X$ from Lemma \ref{lem:Ac_0isTC}; since $\Ac_X$ is a noetherian abelian category, it follows that $\Ac (\pi)_0$ is a torsion class in $\Ac_X$.  Hence it suffices to show the equality $\Ac (\pi)_0 = \Tc_{12}$.

That $\Tc_{12} \subseteq \Ac (\pi)_0$ is clear from Lemma \ref{lem:C00inAcpi0} and the construction of $\Tc_{12}$.  To show the other inclusion, take any $E \in \Ac (\pi)_0$.  Since $E \in \Ac_X^{\leq 1}$ and $\Tc_{11}$ is a torsion class in $\Ac_X^{\leq 1}$ by Lemma \ref{Lucas}, there is an $\Ac^{\leq 1}_X$-short exact sequence
\[
0 \to E' \to E \to E'' \to 0
\]
where $E' \in\Tc_{11} \subset \Tc_{12}$ while $\Hom (\Tc_{11}, E'')=0$. 

Since $\Ac (\pi)_0$ is a Serre subcategory of $\Ac_X$, the $\Phi$-WIT$_0$ component of $E''$ must lie in $\Ac (\pi)_0$; however, we have $\Hom (\Tc_{10},E'')=0$, and so by Lemma \ref{Jade} we must have  $E'' \in W_{1,\Phi}$.  That is, $E'' \in \Ac(\pi)_0 \cap W_{1,\Phi}$.  Since $C_{11} \subset \Tc_{11}$, we also have $\Hom (C_{11}, E'')=0$.  On the other hand,  Property A3 gives  $\wh{E''} \in \Ac (\pi)_0$, and so $\dimension \wh{E''} \leq 1$.  If $\dimension \wh{E''}=1$ then $E'' \in C_{12}$; if $\dimension \wh{E''}=0$ then $E'' \in C_{11}$.  Hence $E'' \in \Tc_{12}$, implying $E \in \Tc_{12}$, and we are done.
\end{proof}

\begin{lem} \label{Max}
$\mathcal{A}^{\leq 1}_X \cap W_{0,\Phi} =\mathcal{F}_2$, and it is a torsion class in $\Ac_X$.
\end{lem}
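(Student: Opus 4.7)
The plan is to prove the equality $\mathcal{A}^{\leq 1}_X \cap W_{0,\Phi} = \mathcal{F}_2$ by establishing both inclusions; the torsion class claim then follows immediately, because both $\mathcal{A}^{\leq 1}_X$ (Lemma \ref{lem:C00isTC}) and $W_{0,\Phi}$ (Property TC3) are torsion classes in $\Ac_X$, and in any abelian category the intersection of two torsion classes is again a torsion class.

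For the inclusion $\mathcal{F}_2 \subseteq \mathcal{A}^{\leq 1}_X \cap W_{0,\Phi}$, I would verify that each generator lies in the intersection: $C_{00} = \mathcal{A}^{\leq 0}_X$ sits in $W_{0,\Phi}$ by Property C0; $C_{10} \subseteq \mathcal{A}(\pi)_0 \cap W_{0,\Phi} \subseteq \mathcal{A}^{\leq 1}_X$ by definition and Property D2; and $C_{20} = \mathcal{A}^{\leq 1}_h$ sits in $\mathcal{A}^{\leq 1}_X$ by definition and in $W_{0,\Phi}$ by Property C1. Extension-closedness of $\mathcal{A}^{\leq 1}_X \cap W_{0,\Phi}$ then yields the containment of the full extension closure.

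For the reverse inclusion, pick any $E \in \mathcal{A}^{\leq 1}_X \cap W_{0,\Phi}$. Using Lemma \ref{Jade}, which says $\mathcal{T}_{10} = \mathcal{A}(\pi)_0 \cap W_{0,\Phi}$ is a torsion class in $\Ac_X$, I would extract an $\Ac_X$-short exact sequence
\[
0 \to E_{10} \to E \to E_2 \to 0
\]
with $E_{10} \in \mathcal{T}_{10} \subseteq \mathcal{F}_2$ and $\Hom(\mathcal{T}_{10}, E_2)=0$; because $\mathcal{A}^{\leq 1}_X \cap W_{0,\Phi}$ is a torsion class, $E_2$ still lies there. It then suffices to show $E_2 \in C_{20} = \mathcal{A}^{\leq 1}_h$.

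The main obstacle is ruling out any $1$-dimensional irreducible component of $\supp(E_2)$ of type (a) in Property Z2. Suppose for contradiction such a component $W_m$ exists. Iterated application of Property Z1, peeling off the other irreducible components of $\supp(E_2)$ one at a time, produces a filtration whose bottom term $K$ is a nonzero subsheaf of $E_2$ with $\supp(K) = W_m$. Since $W_m$ is contained in a single fiber of $\pi$, we have $K \in \mathcal{A}(\pi)_0$; since $W_{0,\Phi}$ is closed under subobjects and $E_2 \in W_{0,\Phi}$, also $K \in W_{0,\Phi}$. Hence $K \in \mathcal{T}_{10}$, and the inclusion $K \hookrightarrow E_2$ furnishes a nonzero element of $\Hom(\mathcal{T}_{10}, E_2)$, a contradiction. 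Therefore $E_2 \in C_{20}$, so $E \in \langle \mathcal{T}_{10}, C_{20}\rangle \subseteq \mathcal{F}_2$.
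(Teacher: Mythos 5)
Your setup (torsion class via intersection, and the inclusion $\Fc_2 \subseteq \Ac^{\leq 1}_X \cap W_{0,\Phi}$) is fine, but the last step of the reverse inclusion has a genuine gap: you assert that $W_{0,\Phi}$ is closed under subobjects. It is not. Property TC3 only makes $W_{0,\Phi}$ a torsion class, hence closed under quotients and extensions; a subsheaf of a $\Phi$-WIT$_0$ sheaf can perfectly well be $\Phi$-WIT$_1$ (e.g.\ on a single smooth fiber, a degree-$0$ line bundle sits inside a degree-$1$ line bundle). So your subsheaf $K$ with $\supp(K)=W_m$ lies in $\Ac(\pi)_0$ but need not lie in $W_{0,\Phi}$, and you cannot conclude $K \in \Tc_{10}$ to contradict $\Hom(\Tc_{10},E_2)=0$. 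Worse, the intermediate claim you are trying to prove --- that $E_2 \in C_{20}$ --- is itself too strong: one can build $E_2 \in \Ac^{\leq 1}_X \cap W_{0,\Phi}$ with $\Hom(\Tc_{10},E_2)=0$ as a non-split extension of a horizontal sheaf by a $\Phi$-WIT$_1$ fiber sheaf, so that $\supp(E_2)$ genuinely contains a vertical component. Such an $E_2$ still lies in $\Fc_2$, just not via membership in a single $C_{ij}$.

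The repair is to orient the Z1 decomposition the other way, which is what the paper does: after splitting off the torsion part (the paper uses the torsion pair coming from $\langle C_{00},C_{20}\rangle$ rather than from $\Tc_{10}$, a minor difference), one peels off the \emph{vertical} components of the remaining piece $E''$ as a \emph{quotient}, producing $0 \to A_h \to E'' \to A_v \to 0$ with $A_h \in C_{00}\cup C_{20}$ and $A_v \in \Ac(\pi)_0$. Since $A_v$ is a quotient of $E'' \in W_{0,\Phi}$, quotient-closure (which \emph{is} available) gives $A_v \in \Ac(\pi)_0 \cap W_{0,\Phi} = \Tc_{10} \subset \Fc_2$ by Lemma \ref{Jade}, while $A_h \in \Fc_2$ directly; extension-closure then puts $E''$, hence $E$, in $\Fc_2$. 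So the conclusion should be reached by exhibiting $E''$ as an extension of a vertical quotient by a horizontal subsheaf, not by showing $E''$ has no vertical support.
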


\begin{proof}
Since $\Ac^{\leq 1}_X$ and $W_{0,\Phi}$ are both torsion classes in $\Ac_X$ (by Lemma \ref{lem:C00isTC} and Property TC3), their intersection is also a torsion class in $\Ac_X$.  

The inclusion $\Fc_2 \subseteq \Ac^{\leq 1}_X \cap W_{0,\Phi}$ follows from Properties C0, C1 and the definition of $C_{10}$.  To see the inclusion $\Ac^{\leq 1}_X \cap W_{0,\Phi} \subseteq \Fc_2$, take any $E \in \Ac^{\leq 1}_X \cap W_{0,\Phi}$.  From Lemma \ref{lem:C00C20isTC}, the extension closure $\langle C_{00}, C_{20}\rangle$ is a torsion class in $\Ac^{\leq 1}_X$; let $\Fc$ denote the corresponding torsion-free class in $\Ac^{\leq 1}_X$.  Then there exists an $\Ac^{\leq 1}_X$-short exact sequence
\[
0 \to E' \to E \to E'' \to 0
\]
where $E' \in \langle C_{00}, C_{20}\rangle$ and $E'' \in \Fc$.  Since $E' \in \Fc_2$, it suffices to show $E'' \in \Fc_2$ when $E''$ is nonzero.

Since $E \in W_{0,\Phi}$, we have $E'' \in W_{0,\Phi}$ by Property TC3.  By the definition of $\Fc$, we have the vanishing $\Hom (C_{00},E'')=0$, and so $\dimension E''$ must be 1. By repeatedly applying Property Z1, and using Property Z2, we can construct an $\Ac_X$-short exact sequence
\[
0 \to A_h \to E'' \to A_v \to 0
\]
where $A_h$ lies in either $C_{00}$ or $\Ac^{\leq 1}_h = C_{20}$, and all the irreducible components of $\supp (A_v)$ are contained in fibers of $\pi$ (i.e.\ $A_v  \in \Ac(\pi)_0$).  Hence $A_h \in \Fc_2$ while $A_v \in \Ac (\pi)_0 \cap W_{0,\Phi} = \Tc_{10} \subset \Fc_2$  (by Lemma \ref{Jade}), and we are done.
\end{proof}

\begin{lem}\label{Tyler}
$\mathcal{T}_{20}$ is a torsion class in $\Ac_X$.
\end{lem}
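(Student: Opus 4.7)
The plan is to reduce the statement directly to the two preparatory results already in hand: Lemma \ref{Sarah} (which identifies $\Tc_{12}$ with $\Ac(\pi)_0$ and shows it is a torsion class in $\Ac_X$) and Lemma \ref{lem:C00C20isTC} (which shows $\langle C_{00},C_{20}\rangle$ is a torsion class in $\Ac_X$), feeding these into the extension lemma (Lemma \ref{Bill}).

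First I would unwind the definition of $\Tc_{20}$. Since $\Tc_{12}=\langle C_{00},C_{10},C_{11},C_{12}\rangle$ and $\Tc_{20}=\langle C_{ij}:i\leq 2,\,0\leq j\leq 2\rangle=\langle C_{00},C_{10},C_{11},C_{12},C_{20}\rangle$, we can write
\[
\Tc_{20}=\langle \Tc_{12},C_{20}\rangle.
\]
By Lemma \ref{Sarah} the class $\Tc_{12}=\Ac(\pi)_0$ is a torsion class in $\Ac_X$, so Lemma \ref{Bill} applies with $\Tc=\Tc_{12}$ and $\mathcal{C}=C_{20}$. Thus it suffices to verify that for every $A\in C_{20}$ and every $\Ac_X$-surjection $A\twoheadrightarrow A'$, the quotient $A'$ lies in $\Tc_{20}$.

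Next I would invoke (the content of) Lemma \ref{lem:C00C20isTC}: its proof shows that any $\Ac_X$-quotient of an object of $C_{20}$ lies in $\langle C_{00},C_{20}\rangle$. Since $C_{00}\subset \Tc_{12}\subset \Tc_{20}$ and $C_{20}\subset \Tc_{20}$, we get the containment
\[
\langle C_{00},C_{20}\rangle \subseteq \Tc_{20},
\]
so $A'\in \Tc_{20}$, as required. Applying Lemma \ref{Bill} finishes the proof.

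There is essentially no obstacle here; the only point worth double-checking is that the extension closure $\Tc_{20}$ really is $\langle \Tc_{12},C_{20}\rangle$ rather than something that mixes $C_{20}$ in a restricted way, but this is immediate from Definition \ref{Jazz} for the even index $n=2$. So the proof is a clean two-step assembly: Lemma \ref{Sarah} supplies the torsion class $\Tc_{12}$, Lemma \ref{lem:C00C20isTC} controls quotients out of $C_{20}$, and Lemma \ref{Bill} glues them together.
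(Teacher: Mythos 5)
Your proof is correct and follows the same overall skeleton as the paper's: both write $\Tc_{20}=\langle \Tc_{12},C_{20}\rangle$, cite Lemma \ref{Sarah} for $\Tc_{12}$ being a torsion class, and then apply Lemma \ref{Bill}, so the only task is to control $\Ac_X$-quotients of objects of $C_{20}$. The one place you diverge is in how you do that last step: the paper observes that $E\in C_{20}\subset\Fc_2$ and invokes Lemma \ref{Max} ($\Fc_2=\Ac_X^{\leq 1}\cap W_{0,\Phi}$ is a torsion class), so the quotient lands in $\Fc_2\subset\Tc_{20}$; you instead invoke Lemma \ref{lem:C00C20isTC} directly, placing the quotient in $\langle C_{00},C_{20}\rangle\subset\Tc_{20}$. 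Both containments are valid and both lemmas are available at this point, but your choice is marginally more economical: Lemma \ref{lem:C00C20isTC} is proved using only the support properties D1, Z1, Z2, whereas Lemma \ref{Max} additionally relies on the Fourier--Mukai input (Properties C0, C1, TC3 and Lemma \ref{Jade}), none of which is actually needed here. One small streamlining: you do not need to ``unwind the proof'' of Lemma \ref{lem:C00C20isTC} to extract the claim about quotients of $C_{20}$ --- the statement itself suffices, since a torsion class is closed under quotients and $C_{20}\subseteq\langle C_{00},C_{20}\rangle$.
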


\begin{proof}
We have $\mathcal{T}_{20}= \langle \mathcal{T}_{12}, C_{20}\rangle$ by definition.  For any  $E \in C_{20}$ and any $\mathcal{A}_X$-quotient $E'$ of $E$, we have $E \in \Fc_2$ and hence $E' \in \Fc_2 \subset \Tc_{20}$ by Lemma \ref{Max}.  Then by Lemmas \ref{Sarah} and \ref{Bill}, we conclude $\mathcal{T}_{20}$ is a torsion class in $\Ac_X$.
\end{proof}

\begin{lem}\label{Jamie}
$\mathcal{F}_3$ is a torsion class in $\Ac_X$.
\end{lem}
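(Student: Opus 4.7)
The plan is to apply Lemma \ref{Bill} with $\Tc = \Fc_2$ and $\mathcal{C} = C_{30}$: since $\Fc_2$ is already known to be a torsion class in $\Ac_X$ by Lemma \ref{Max}, it suffices to show that for every $E \in C_{30}$ and every $\Ac_X$-surjection $E \twoheadrightarrow E'$, the quotient $E'$ lies in $\langle \Fc_2, C_{30}\rangle = \Fc_3$.

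So fix $E \in C_{30} = \Ac^2(\pi)_1 \cap W_{0,\Phi}$ and an $\Ac_X$-quotient $E'$ of $E$. By Property D1 we have $\dimension E' \leq \dimension E = 2$, so I would split into two cases according to $\dimension E'$. First, if $\dimension E' \leq 1$, then since $W_{0,\Phi}$ is a torsion class in $\Ac_X$ by Property TC3 (and is therefore closed under quotients), we get $E' \in W_{0,\Phi}$. Combined with $E' \in \Ac^{\leq 1}_X$, Lemma \ref{Max} then yields $E' \in \Ac^{\leq 1}_X \cap W_{0,\Phi} = \Fc_2 \subseteq \Fc_3$.

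Second, if $\dimension E' = 2$, then Lemma \ref{Eddie} applied with $m=2$ tells us directly that $E' \in \Ac^2(\pi)_1 \cap W_{0,\Phi} = C_{30} \subseteq \Fc_3$. In either case $E' \in \Fc_3$, so the hypothesis of Lemma \ref{Bill} is verified and the conclusion $\Fc_3 = \langle \Fc_2, C_{30}\rangle$ is a torsion class in $\Ac_X$ follows.

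I do not expect a significant obstacle: the argument is a direct parallel of the proof of Lemma \ref{Tyler} for $\Tc_{20}$, with the role of Lemma \ref{Sarah} replaced by Lemma \ref{Max} and the role of the trivial observation ``quotients of $C_{20}$ lie in $\Fc_2$'' replaced by the case analysis above, where the 2-dimensional case is exactly the content of Lemma \ref{Eddie}. The only mild point to watch is ensuring that we reduce quotients of $C_{30}$-objects to $\Fc_3$ rather than to some larger subcategory; the dimension dichotomy together with Property D1 makes this automatic.
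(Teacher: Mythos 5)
Your proof is correct and follows exactly the paper's argument: reduce via Lemma \ref{Bill} (with $\Fc_2$ a torsion class by Lemma \ref{Max}) to showing quotients of $C_{30}$ lie in $\Fc_3$, then split on $\dimension E'$ using Property D1, handling the case $\dimension E' \leq 1$ via Property TC3 and Lemma \ref{Max}, and the case $\dimension E' = 2$ via Lemma \ref{Eddie}. No differences of substance.
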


\begin{proof}
We have $\mathcal{F}_{3}= \langle \mathcal{F}_{2}, C_{30}\rangle$ by definition.  Take any $E \in C_{30}$ and any $\mathcal{A}_X$-surjection $E \twoheadrightarrow E'$.  By Lemmas \ref{Max} and \ref{Bill}, it suffices to show $E' \in \Fc_3$.

By Property TC3 we have  $E' \in W_{0,\Phi}$, while by Property D1 $\dim(E') \leq 2$. If $\dim(E') \leq 1$, then $E' \in \mathcal{F}_2$ by Lemma \ref{Max}; if $\dim(E') = 2$, then by Lemma \ref{Eddie}, we have  $E' \in C_{30}$. This completes the proof.
\end{proof}

\begin{lem}\label{Zuly}
$\mathcal{T}_{30}$ is a torsion class in $\Ac_X$.
\end{lem}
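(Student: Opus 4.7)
The plan is to mimic the structure of Lemma \ref{Jamie}, invoking Lemma \ref{Bill} with $\Tc = \Tc_{20}$ (which is a torsion class by Lemma \ref{Tyler}) and $\mathcal{C} = C_{30}$. Since $\Tc_{30} = \langle \Tc_{20}, C_{30}\rangle$ by definition, it suffices to show that for any $E \in C_{30}$ and any $\Ac_X$-surjection $E \twoheadrightarrow E'$, one has $E' \in \Tc_{30}$.

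First I would record the two automatic features of such a quotient. By Property TC3, $E' \in W_{0,\Phi}$. By Property D1, $\dim E' \leq \dim E = 2$. I then split into cases according to $\dim E'$.

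If $\dim E' \leq 1$, then $E' \in \Ac^{\leq 1}_X \cap W_{0,\Phi}$, which equals $\Fc_2$ by Lemma \ref{Max}, and $\Fc_2 \subseteq \Tc_{20} \subseteq \Tc_{30}$, so we are done. If $\dim E' = 2$, I would invoke Lemma \ref{Eddie} with $m = 2$ directly: since $E \in C_{30} = \Ac^2(\pi)_1 \cap W_{0,\Phi}$ and $\dim E' = m = 2$, Lemma \ref{Eddie} gives $E' \in \Ac^2(\pi)_1 \cap W_{0,\Phi} = C_{30} \subseteq \Tc_{30}$.

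I do not expect any genuine obstacle here: the argument is completely parallel to the proof of Lemma \ref{Jamie}, with the only structural difference being that the quotient might have dimension $2$ (rather than being forced into $\Fc_2$), and this case is handled by the already-proved Lemma \ref{Eddie}. The only point to be slightly careful about is ruling out $\dim(\pi(\supp E')) = 2$ when $\dim E' = 2$: this is implicitly ensured by Property D3 (applied to $0 \to K \to E \to E' \to 0$), which forces $\dim(\pi(\supp E')) \leq \dim(\pi(\supp E)) = 1$, so that the hypothesis $E' \in \Ac^m(\pi)_{m-1}$ of Lemma \ref{Eddie} is indeed satisfied.
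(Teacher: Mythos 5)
Your proof is correct and takes essentially the same route as the paper: the paper simply notes that $C_{30}\subset\Fc_3$ and invokes Lemma \ref{Jamie} to conclude $E'\in\Fc_3\subset\Tc_{30}$, whereas you inline the case analysis from the proof of Lemma \ref{Jamie} (via Lemmas \ref{Max} and \ref{Eddie}). The underlying content is identical.
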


\begin{proof}
We have $\mathcal{T}_{30}= \langle \mathcal{T}_{20}, C_{30}\rangle$ by definition.  Take any $E \in C_{30}$ and any $\Ac_X$-surjection $E \twoheadrightarrow E'$.  By Lemmas \ref{Tyler} and \ref{Bill}, it suffices to show $E' \in \Tc_{30}$.  Since $C_{30} \subset \Fc_{3}$, by Lemma \ref{Jamie} we have $E' \in \Fc_3 \subset \Tc_{30}$, and we are done.
\end{proof}

\begin{rem}\label{rem:C31-note1}
We have $C_{31} \subset \Ac (\pi)_{\leq 1}$.  To see this, note that $C_{20} = \Ac^{\leq 1}_h$ is contained in $\Ac (\pi)_{\leq 1}$ by Property D2.  Then by Properties C1 and A3, we have $C_{31} \subseteq \Phi (C_{20}) \subset \Ac (\pi)_{\leq 1}$.
\end{rem}

\begin{lem}\label{Gaby}
$\mathcal{T}_{31}$ is a torsion class in $\Ac_X$.
\end{lem}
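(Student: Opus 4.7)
The plan is to apply Lemma \ref{Bill} with $\Tc = \Tc_{30}$ (a torsion class by Lemma \ref{Zuly}) and $\mathcal{C} = C_{31}$, so it suffices to fix an arbitrary $E \in C_{31}$ together with an $\Ac_X$-surjection $E \twoheadrightarrow E'$ and show $E' \in \Tc_{31}$. By Remark \ref{rem:C31-note1} we have $E \in \Ac(\pi)_{\leq 1}$; since $\Ac(\pi)_{\leq 1}$ is a Serre subcategory of $\Ac_X$ (Lemma \ref{lem:C00isTC}), also $E' \in \Ac(\pi)_{\leq 1}$. Applying the torsion pair $(W_{0,\Phi},W_{1,\Phi})$ of Property TC3 to $E'$ then produces an $\Ac_X$-short exact sequence
\[
0 \to E'_0 \to E' \to E'_1 \to 0
\]
with $E'_0 \in W_{0,\Phi}$ and $E'_1 \in W_{1,\Phi}$; both remain in $\Ac(\pi)_{\leq 1}$ by the Serre property. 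It is enough to place each of $E'_0$ and $E'_1$ in $\Tc_{31}$.

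For $E'_0$ I would split on dimension. If $\dim E'_0 \leq 1$, Lemma \ref{Max} gives $E'_0 \in \Fc_2 \subset \Tc_{20} \subset \Tc_{31}$. If $\dim E'_0 = 2$, Property D2 combined with $E'_0 \in \Ac(\pi)_{\leq 1}$ forces $\dim (\pi (\supp E'_0)) = 1$, so $E'_0 \in \Ac^2(\pi)_1 \cap W_{0,\Phi} = C_{30} \subset \Tc_{30} \subset \Tc_{31}$.

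For $E'_1$, the key move is to apply $\Phi$ to the surjection $E \twoheadrightarrow E'_1$: since both source and target are $\Phi$-WIT$_1$, the long exact cohomology sequence (Properties A1 and A4) produces an $\Ac_X$-surjection $\wh{E} \twoheadrightarrow \wh{E'_1}$. Since $\wh{E} \in C_{20}$, Property D1 forces $\supp \wh{E'_1} \subseteq \supp \wh{E}$, and Property Z2 then gives that $\wh{E'_1}$ is either zero, 0-dimensional (hence in $C_{00}$), or 1-dimensional (hence in $C_{20}$). In the $C_{00}$ case, Property A3 places $E'_1 \in \Ac(\pi)_0 \cap W_{1,\Phi}$ with $\dim \wh{E'_1}=0$, giving $E'_1 \in C_{11} \subset \Tc_{31}$. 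In the $C_{20}$ case, Properties A3 and D2 give $\dim (\pi (\supp E'_1)) = 1$ and $\dim E'_1 \in \{1,2\}$; the alternative $\dim E'_1 = 1$ would put $E'_1$ into $\Ac^{\leq 1}_h = C_{20} \subset W_{0,\Phi}$, contradicting $E'_1 \in W_{1,\Phi}$, so $\dim E'_1 = 2$, and Property A2 together with the symmetric form of the $C_{31}$ definition (with $\Phi$ and $\wh{\Phi}$ swapped, as noted just after the $C_{ij}$ are introduced) yields $E'_1 \in C_{31} \subset \Tc_{31}$. The main obstacle is this last step: carefully tracking the dimension of $E'_1$ and its behavior under $\wh{\Phi}$ via A3, D2, and the $\Phi/\wh{\Phi}$ symmetry, which rules out any ``mixed'' outcome in $\langle C_{00}, C_{20}\rangle$ that would otherwise not obviously sit inside $C_{11} \cup C_{31}$.
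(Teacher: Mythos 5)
Your overall architecture is the paper's own: reduce via Lemma \ref{Bill} to quotients of $C_{31}$, split $E'$ by the torsion pair $(\Wo,\Wi)$, handle the $\Wo$ piece by dimension exactly as you do, and for the $\Wi$ piece use the induced surjection $\wh{E}\twoheadrightarrow \wh{E'_1}$ (which the paper packages as Lemma \ref{Gabriel}; your long-exact-sequence derivation of it is fine) to land $\wh{E'_1}$ in $\langle C_{00},C_{20}\rangle$ and conclude via the $\Phi/\wh{\Phi}$ symmetry. Up to and including the $C_{00}$ subcase, everything checks out.

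The genuine problem is your exclusion of $\dimension E'_1=1$ in the $C_{20}$ subcase. The implication ``$\dimension E'_1=1$ and $\dimension(\pi(\supp E'_1))=1$ implies $E'_1\in\Ac^{\leq 1}_h$'' is false: $\Ac^{\leq 1}_h$ requires \emph{every} 1-dimensional irreducible component of the support to be of type (b) in Property Z2, whereas $\dimension(\pi(\supp E'_1))=1$ only guarantees that at least one component is horizontal; $\supp E'_1$ sits inside $\supp E$, which for $E\in C_{31}\subset\Ac(\pi)_{\leq 1}$ is a union of surfaces fibered over curves in $B$ and so contains plenty of vertical curves, and nothing in the axioms forbids $E'_1$ from having such components. (Your intended contradiction would also only yield $E'_1=0$, not an absurdity, since $W_{0,\Phi}\cap W_{1,\Phi}=0$.) Fortunately the case does not need to be excluded at all: if $\dimension E'_1\leq 1$ then $E'_1\in\Ac^{\leq 1}_X=\Tc_{20}\subseteq\Tc_{31}$ by Lemma \ref{lem:T20isAleq1} (whose proof is independent of the present lemma, resting only on Properties Z1, Z2 and Lemma \ref{Sarah}). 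This is precisely how the paper proceeds: it disposes of $\dimension A_1\leq 1$ first and only runs the transform argument under the standing assumption $\dimension A_1=2$. Replace your contradiction step with that reduction and the proof is correct.
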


\begin{proof}
We have $\mathcal{T}_{31}= \langle \mathcal{T}_{30}, C_{31}\rangle$ by definition.  Take any $E \in C_{31}$ and any $\Ac_X$-surjection $E \twoheadrightarrow E'$.  By Lemmas \ref{Zuly} and \ref{Bill}, it suffices for us to show $E' \in \Tc_{31}$.  By Property TC3, there is an $\Ac_X$-short exact sequence
\[
0 \to A_0 \to E' \to A_1 \to 0
\]
where $A_i \in W_{i,\Phi}$. Note that $\dimension A_0, \dimension A_1 \leq 2$ by Property D1. 

If $\dimension A_0 \leq 1$, then $A_0 \in \Fc_2$ by Lemma \ref{Max}, and so $A_0 \in \Tc_{31}$.  Now suppose  $\dimension A_0=2$; since $E \in \Ac (\pi)_{\leq 1}$ by Remark \ref{rem:C31-note1}, it follows that $A_0 \in \Ac(\pi)_{\leq 1}$ by Lemma \ref{lem:C00isTC}, i.e.\ $A_0 \in \Ac (\pi)_{\leq 1} \cap W_{0,\Phi}$.  Note that $\dimension (\pi (\supp A_0))$ must be exactly 1 by Property D2, and so $A_0 \in C_{30} \subset \Tc_{31}$.

As for $A_1$, we have $A_1 \in \Tc_{31}$ if $\dimension A_1 \leq 1$ as above, so let us assume $\dimension A_1=2$.  Note that $C_{31}$ is contained in $W_{1,\Phi}$ by Property C1 and Lemma \ref{lem:W01transform}.  Applying Lemma \ref{Gabriel} to the composite $\Ac_X$-surjection $E\twoheadrightarrow E' \twoheadrightarrow A_1$ then gives $\wh{A_1} \in C_{20}$, which shows $A_1 \in C_{31} \subset \Tc_{31}$.  

Overall, we have $E' \in \Tc_{31}$, completing the proof.
\end{proof}

\begin{lem}\label{lem:T20isAleq1}
$\mathcal{T}_{20} = \Ac_X^{\leq 1}$.
\end{lem}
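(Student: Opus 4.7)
The plan is to prove the two inclusions $\Tc_{20} \subseteq \Ac_X^{\leq 1}$ and $\Ac_X^{\leq 1} \subseteq \Tc_{20}$ separately. The first is essentially immediate: by Lemma \ref{Sarah}, $\Tc_{12} = \Ac(\pi)_0$, and every object in $\Ac(\pi)_0$ is supported on a finite union of fibers of $\pi$, hence is at most 1-dimensional by Property D2. Together with the inclusion $C_{20} = \Ac_h^{\leq 1} \subseteq \Ac_X^{\leq 1}$ and the fact that $\Ac_X^{\leq 1}$ is extension-closed (being a Serre subcategory by Lemma \ref{lem:C00isTC}), we conclude $\Tc_{20} = \langle \Tc_{12}, C_{20}\rangle \subseteq \Ac_X^{\leq 1}$.

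For the reverse inclusion, given any $E \in \Ac_X^{\leq 1}$, I would use the torsion pair $(W_{0,\Phi}, W_{1,\Phi})$ from Property TC3 to produce a short exact sequence
\[
0 \to E_0 \to E \to E_1 \to 0
\]
with $E_0 \in W_{0,\Phi}$ and $E_1 \in W_{1,\Phi}$. Since $\Ac_X^{\leq 1}$ is a Serre subcategory, both $E_0$ and $E_1$ belong to $\Ac_X^{\leq 1}$. The first piece is easy to place: $E_0 \in \Ac_X^{\leq 1} \cap W_{0,\Phi} = \Fc_2 \subseteq \Tc_{20}$ by Lemma \ref{Max}.

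The remaining step, and the main technical point, is to show $E_1 \in \Tc_{20}$. My plan is to mimic the decomposition at the end of the proof of Lemma \ref{Max}: by repeatedly applying Property Z1 and using the dichotomy of Property Z2, I would construct a short exact sequence
\[
0 \to A_h \to E_1 \to A_v \to 0
\]
where $A_h$ lies in $C_{00}$ or $C_{20}$ (carrying the horizontal and 0-dimensional components of the support) and $A_v \in \Ac(\pi)_0$ (carrying the fiber components). The key observation is then that $C_{00} \subset W_{0,\Phi}$ (Property C0) and $C_{20} \subset W_{0,\Phi}$ (Property C1), so $A_h \in W_{0,\Phi}$; on the other hand, $W_{1,\Phi}$ is closed under subobjects (as the torsion-free class in the torsion pair), so $A_h \hookrightarrow E_1 \in W_{1,\Phi}$ forces $A_h \in W_{1,\Phi}$ as well. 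Since $W_{0,\Phi} \cap W_{1,\Phi} = 0$, we conclude $A_h = 0$ and hence $E_1 \cong A_v \in \Ac(\pi)_0 = \Tc_{12} \subseteq \Tc_{20}$, finishing the proof. The only subtlety I anticipate is making sure the Z1/Z2 decomposition of $E_1$ is carried out cleanly, but this is a direct replay of the argument already used in Lemma \ref{Max}.
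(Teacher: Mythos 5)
Your proof is correct, but for the substantive inclusion $\Ac_X^{\leq 1} \subseteq \Tc_{20}$ it takes a different and somewhat longer route than the paper. The paper applies the Z1/Z2 horizontal--vertical decomposition directly to $E$ itself, obtaining $0 \to E_h \to E \to E_v \to 0$ with $E_h \in C_{00} \cup C_{20}$ and $E_v \in \Ac(\pi)_0 = \Tc_{12}$ (Lemma \ref{Sarah}), and concludes immediately; no appeal to the torsion pair $(\Wo,\Wi)$ or to Lemma \ref{Max} is needed. You instead first split $E$ along $(\Wo,\Wi)$, handle $E_0$ via Lemma \ref{Max}, and then apply the Z1/Z2 decomposition only to $E_1$, using the clash between Properties C0/C1 and closure of $\Wi$ under subobjects to kill the horizontal piece. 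The extra step is logically sound (the intersection $\Wo \cap \Wi$ is indeed zero, and $\Wi$ is closed under subobjects as the torsion-free class), and as a by-product your argument establishes the clean fact $\Ac_X^{\leq 1} \cap \Wi \subseteq \Ac(\pi)_0$ --- a one-dimensional analogue of Lemma \ref{Daniel} that, unlike that lemma, avoids the Chern character Properties and so holds for a general Weierstra{\ss} threefold. The trade-off is that your proof depends on Lemma \ref{Max} and Property TC3, whereas the paper's depends only on Z1, Z2 and Lemma \ref{Sarah}.
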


\begin{proof}
The inclusion $\mathcal{T}_{20} \subseteq \Ac_X^{\leq 1}$ is clear.  To see the other inclusion, take any $E \in \Ac_X^{\leq 1}$.  By Properties Z1 and Z2, there is an $\Ac_X^{\leq 1}$-short exact sequence 
\[
0 \to E_h \to E \to E_v \to 0
\]
where $E_h$ lies in $C_{00}$ (if $\dimension E_h=0$) or $C_{20}$ (if $\dimension E_h=1$), and $E_v$ lies in  $\Ac (\pi)_0$.  Since $\Ac (\pi)_0 = \Tc_{12}$ by Lemma \ref{Sarah}, we see that $E$ lies in $\Tc_{20}$.
\end{proof}

\begin{lem}\label{Erik}
Suppose $Q \in W_{1,\Phi}$ satisfies $\wh{Q}\in \mathcal{A}^{\leq1}$.  Then  $Q\in \mathcal{T}_{31}$. 
\end{lem}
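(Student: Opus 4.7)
My plan is to decompose $\wh{Q}$ into a horizontal part and a vertical part using Properties Z1 and Z2, apply $\wh{\Phi}$ to transform this decomposition back to a short exact sequence involving $Q$, and then show each piece lies in $\Tc_{31}$.

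First I would observe that since $Q$ is $\Phi$-WIT$_1$, its transform $\wh{Q}$ is $\wh{\Phi}$-WIT$_0$ with $\wh{\Phi}\wh{Q} \cong Q$, by Property A2 together with the $\wh{\Phi}$-analog of Lemma \ref{lem:W01transform}.  The hypothesis $\wh{Q} \in \Ac_X^{\leq 1}$ then lets me run the Z1/Z2 decomposition used in the proof of Lemma \ref{Max} to produce a short exact sequence
\[
0 \to A_h \to \wh{Q} \to A_v \to 0
\]
in $\Ac_X$, where $A_h$ lies in either $C_{00}$ or $C_{20}$ (according to its dimension) and $A_v \in \Ac(\pi)_0 \cap \Ac_X^{\leq 1}$.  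I would next check that both terms are $\wh{\Phi}$-WIT$_0$: the subsheaf $A_h$ by the symmetric versions of Properties C0 and C1, and the quotient $A_v$ because it is an $\Ac_X$-quotient of $\wh{Q} \in W_{0,\wh{\Phi}}$, and the latter is a torsion class (symmetric Property TC3), hence closed under quotients.

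Applying $\wh{\Phi}$ to the above sequence would then yield an $\Ac_X$-short exact sequence
\[
0 \to \wh{A_h} \to Q \to \wh{A_v} \to 0.
\]
For the rightmost term, Property A3 gives $\wh{A_v} \in \Ac(\pi)_0$, and Lemma \ref{Sarah} identifies $\Ac(\pi)_0 = \Tc_{12} \subseteq \Tc_{31}$.  For the leftmost term I would split into two cases.  If $A_h \in C_{00}$, then the $\wh{\Phi}$-version of Lemma \ref{lem:C11reformulation} places $\wh{A_h}$ in $C_{11} \subseteq \Tc_{31}$.  If $A_h \in C_{20}$, then $\wh{A_h} \in \wh{\Phi}(C_{20})$, and I would use the $\wh{\Phi}$-analog of Property A4p (derivable from A2 together with A4p) combined with Properties CH1p and CH2p to show $\dim \wh{A_h} = 2$, which by the symmetry convention for the $C_{ij}$ places $\wh{A_h}$ in $C_{31}$.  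In either case $\wh{A_h} \in \Tc_{31}$, and since $\Tc_{31}$ is extension-closed (being a torsion class by Lemma \ref{Gaby}), the short exact sequence forces $Q \in \Tc_{31}$.

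The main obstacle I anticipate is the dimension analysis in the $A_h \in C_{20}$ case:  a priori $\wh{A_h}$ could have dimension $1$ rather than $2$, and ruling this out seems to require the Chern character machinery of Properties CH1p and A4p rather than a purely categorical argument from the support properties alone.
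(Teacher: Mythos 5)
Your proposal is correct, and its skeleton coincides with the paper's proof: the same Z1/Z2 decomposition $0 \to A_h \to \wh{Q} \to A_v \to 0$, the same WIT$_0$ checks, the same transformed short exact sequence $0 \to \wh{A_h} \to Q \to \wh{A_v} \to 0$, and the same treatment of $\wh{A_v}$ (via Property A3 and Lemma \ref{Sarah}) and of the case $A_h \in C_{00}$ (via Lemma \ref{lem:C11reformulation}). The one place you diverge is the case $A_h \in C_{20}$: the ``obstacle'' you anticipate is not actually an obstacle, because there is no need to rule out $\dimension \wh{A_h} \leq 1$. The paper simply splits into cases: if $\dimension \wh{A_h} = 2$ then $\wh{A_h} \in C_{31} \subset \Tc_{31}$ by the definition of $C_{31}$, while if $\dimension \wh{A_h} \leq 1$ then $\wh{A_h} \in \Ac_X^{\leq 1} = \Tc_{20} \subset \Tc_{31}$ by Lemma \ref{lem:T20isAleq1} --- either way the conclusion holds. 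Your Chern-character route (using CH1p, CH2p and the $\wh{\Phi}$-analog of A4p to deduce $\alpha_{11} \neq 0$, hence $\codimension \wh{A_h} \leq 1$, hence $\dimension \wh{A_h} = 2$) does work and even yields the sharper statement that the low-dimensional case never occurs; but it buys this at the cost of invoking the Properties labelled with ``p'', which are specific to the product threefold $X = C \times B$, whereas the paper's case split keeps Lemma \ref{Erik} valid for a general Weierstra{\ss} elliptic threefold and is shorter. If you keep your version, you should also spell out why $\dimension(\pi(\supp A_h)) = 1$ for nonzero $A_h \in C_{20}$ (needed to get $\alpha_{11} \neq 0$ from CH2p); this follows from Property Z2(b) but is an extra step the paper's argument never needs.
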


\begin{proof}
As in the proof of Lemma \ref{lem:T20isAleq1}, there is an $\Ac^{\leq 1}_X$-short exact sequence
\begin{equation}\label{eq:Erik-1}
0 \to A_h \to \wh{Q} \to A_v \to 0
\end{equation}
where $A_h \in C_{00} \cup C_{20}$ and $A_v \in\Ac(\pi)_0$.  Note that $\wh{Q} \in W_{0,\wh{\Phi}}$ by Lemma \ref{lem:W01transform}, and so $A_v \in W_{0,\wh{\Phi}}$ by Property TC3. By Property A3 and Lemma \ref{lem:W01transform}, we now have  $\wh{A_v} \in \Ac (\pi)_0$, and so $\wh{A_v}\in \Tc_{12} \subset \Tc_{31}$ by Lemma \ref{Sarah}.

On the other hand, note that $A_h \in W_{0,\wh{\Phi}}$ by Properties C0 and C1.  If $A_h \in C_{00}$ then $\wh{A_h} \in C_{11} \subset \Tc_{31}$ by Lemma \ref{lem:C11reformulation}.  If $A_h \in C_{20}$, then there are two cases for $\wh{A_h}$: (i) if $\dimension \wh{A_h}=2$, then $\wh{A_h} \in C_{31}$ by the definition of $C_{31}$, in which case $\wh{A_h} \in \Tc_{31}$; (ii) if $\dimension \wh{A_h} \leq 1$, then $\wh{A_h} \in \Tc_{20}$ by Lemma \ref{lem:T20isAleq1}, and we still have $\wh{A_h} \in \Tc_{31}$.

Thus both $\wh{A_h}, \wh{A_v}$ lie in $\Tc_{31}$.  Since $\Phi$ takes the short exact sequence \eqref{eq:Erik-1} to the $\Ac_X$-short exact sequence
\[
0 \to \wh{A_h} \to Q \to \wh{A_v} \to 0,
\]
we see that $Q$ itself lies in $\Tc_{31}$.
\end{proof}

\begin{lem}\label{Manuel}
$\mathcal{T}_{32}$ is a torsion class in $\Ac_X$.
\end{lem}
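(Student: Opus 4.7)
The plan is to apply Lemma \ref{Bill} with $\Tc = \Tc_{31}$ (which is already a torsion class by Lemma \ref{Gaby}) and $\Cc = C_{32}$, so that $\langle \Tc_{31}, C_{32}\rangle = \Tc_{32}$. Thus I take an arbitrary $E \in C_{32}$ together with an $\Ac_X$-surjection $E \twoheadrightarrow E'$, and must show $E' \in \Tc_{32}$.

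First I would dispose of the low-dimensional case: if $\dimension E' \leq 1$, then $E' \in \Ac^{\leq 1}_X = \Tc_{20} \subseteq \Tc_{31}$ by Lemma \ref{lem:T20isAleq1}. So assume $\dimension E' = 2$. Since $E \in C_{32} \subseteq \Ac(\pi)_{\leq 1}$, and $\Ac(\pi)_{\leq 1}$ is a Serre subcategory by Lemma \ref{lem:C00isTC}, the quotient $E'$ also lies in $\Ac(\pi)_{\leq 1}$. Now decompose $E'$ using the torsion pair $(W_{0,\Phi}, W_{1,\Phi})$ from Property TC3 to obtain an $\Ac_X$-short exact sequence
\[
0 \to A_0 \to E' \to A_1 \to 0, \qquad A_i \in W_{i,\Phi},
\]
and I will argue that both $A_0$ and $A_1$ lie in $\Tc_{32}$.

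For $A_0 \in W_{0,\Phi}$: if $\dimension A_0 \leq 1$, then Lemma \ref{Max} places $A_0$ in $\Fc_2 \subseteq \Tc_{31}$; if $\dimension A_0 = 2$, then the Serre property of $\Ac(\pi)_{\leq 1}$ gives $A_0 \in \Ac(\pi)_{\leq 1} \cap W_{0,\Phi}$, and the second half of Property D2 (applied to $\pi$, which is flat of relative dimension $1$) forces $\dimension(\pi(\supp A_0)) = 1$, so $A_0 \in \Ac^2(\pi)_1 \cap W_{0,\Phi} = C_{30} \subseteq \Tc_{31}$. For $A_1$ with $\dimension A_1 \leq 1$, the same Lemma \ref{lem:T20isAleq1} argument again gives $A_1 \in \Tc_{20} \subseteq \Tc_{31}$.

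The delicate step, and the one I expect to be the main obstacle, is handling $A_1 \in W_{1,\Phi}$ when $\dimension A_1 = 2$; here the point is to recognize exactly when $A_1$ already falls into $C_{32}$ versus when it lies in $\Tc_{31}$. The key move is to feed the surjection $E \twoheadrightarrow E'$ together with the above short exact sequence into Lemma \ref{Gabriel}: since $E$ is $\Phi$-WIT$_1$, this yields an $\Ac_X$-surjection $\wh{E} \twoheadrightarrow \wh{A_1}$. Because $\dimension \wh{E} = 2$ from the defining condition of $C_{32}$, Property D1 forces $\dimension \wh{A_1} \leq 2$. Either $\dimension \wh{A_1} = 2$, in which case $A_1 \in \Ac^2(\pi)_1 \cap W_{1,\Phi}$ with $\dimension \wh{A_1} = 2$, giving $A_1 \in C_{32}$ directly from the definition; or $\dimension \wh{A_1} \leq 1$, in which case $\wh{A_1} \in \Ac^{\leq 1}_X$ and Lemma \ref{Erik} yields $A_1 \in \Tc_{31}$. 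Combining all the cases, both $A_0$ and $A_1$ lie in $\Tc_{32}$, so $E' \in \Tc_{32}$, and Lemma \ref{Bill} finishes the proof.
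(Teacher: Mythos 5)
Your proof is correct and follows essentially the same route as the paper's: the same appeal to Lemma \ref{Bill} with $\Tc_{31}$ and $C_{32}$, the same $W_{0,\Phi}/W_{1,\Phi}$ decomposition of $E'$ via Property TC3, and the same case analysis landing $A_0$ in $C_{30}$ and $A_1$ either in $C_{32}$ or, via Lemma \ref{Erik}, in $\Tc_{31}$. The only cosmetic difference is that you bound $\dimension \wh{A_1} \leq 2$ by feeding the surjection into Lemma \ref{Gabriel} and applying Property D1, whereas the paper obtains the same bound directly from Properties A3 and D2.
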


\begin{proof} 
We have $\mathcal{T}_{32}= \langle \mathcal{T}_{31}, C_{32}\rangle$ by definition.  Take any $E \in C_{32}$ and any $\Ac_X$-surjection $E \twoheadrightarrow E'$.  By Lemmas \ref{Gaby} and \ref{Bill}, it suffices for us to show $E' \in \Tc_{32}$.  By Property TC3, there is an $\Ac_X$-short exact sequence 
\[
 0 \to A_0 \to E' \to A_1 \to 0
 \]
where $A_i \in W_{i,\Phi}$.   Property D1 implies  $\dim(A_0),\dim(A_1)\leq 2$.  Since $\Ac_X^{\leq 1} = \Tc_{20} \subset \Tc_{32}$ by Lemma \ref{lem:T20isAleq1}, we can further assume $\dimension A_0 = \dimension A_1 = 2$.

That $E' \in C_{32}$ means $E' \in \Ac(\pi)_{\leq 1}$, and so by Properties D2 and D3,  we  have $\dimension (\pi (\supp A_0)) = \dimension (\pi (\supp A_1))=1$.  Hence $A_0 \in C_{30} \subset \Tc_{32}$.

On the other hand, we have  $A_1 \in \mathcal{A}^2(\pi)_1\cap \Wi$. By Properties A3 and D2 we know  $\dim(\wh{A_1})$ is either 1 or 2.  If $\dim(\wh{A_1}) = 1$, then $A_1 \in \Tc_{31} \subset \Tc_{32}$ by Lemma \ref{Erik};  if $\dimension \wh{A_1}=2$, then $A_1 \in C_{32} \subset \Tc_{32}$.  Thus $E' \in \Tc_{32}$, and we are done.
\end{proof}

\begin{lem}\label{Esteban}
$\Ac_X^{\leq 2} = \mathcal{T}_{40}$.
\end{lem}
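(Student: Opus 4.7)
My plan is to prove the two inclusions separately, with the containment $\Tc_{40} \subseteq \Ac_X^{\leq 2}$ being essentially immediate from the definitions: each of the generating subcategories $C_{00}, C_{10}, C_{11}, C_{12}, C_{20}, C_{30}, C_{31}, C_{32}, C_{40}$ consists of objects of dimension at most 2 (the categories $C_{0j}, C_{1j}, C_{20}$ are subcategories of $\Ac_X^{\leq 1}$, while the categories $C_{3j}, C_{40}$ consist of 2-dimensional objects), and $\Ac_X^{\leq 2}$ is extension-closed by Lemma \ref{lem:C00isTC}.

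For the reverse inclusion, I would take any $E \in \Ac_X^{\leq 2}$ and apply the torsion pair $(W_{0,\Phi}, W_{1,\Phi})$ to obtain an $\Ac_X$-short exact sequence $0 \to A_0 \to E \to A_1 \to 0$ with $A_i \in W_{i,\Phi}$. Since $\Ac_X^{\leq 2}$ is a Serre subcategory (Lemma \ref{lem:C00isTC}), both $A_0, A_1$ lie in $\Ac_X^{\leq 2}$, so it suffices to show $A_0, A_1 \in \Tc_{40}$ individually.

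For $A_1 \in \Ac_X^{\leq 2} \cap W_{1,\Phi}$: if $\dimension A_1 \leq 1$ then $A_1 \in \Tc_{20} \subset \Tc_{40}$ by Lemma \ref{lem:T20isAleq1}. If $\dimension A_1 = 2$, then Lemma \ref{Daniel} forces $A_1 \in \Ac (\pi)_{\leq 1}$, and Property D2 forces $\dimension (\pi (\supp A_1)) = 1$, so $A_1 \in \Ac^2 (\pi)_1 \cap W_{1,\Phi}$. By Property A3 and D2, $\dimension \wh{A_1} \in \{1,2\}$; in the former case Lemma \ref{Erik} gives $A_1 \in \Tc_{31} \subset \Tc_{40}$, and in the latter $A_1 \in C_{32} \subset \Tc_{40}$ by the definition of $C_{32}$.

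For $A_0 \in \Ac_X^{\leq 2} \cap W_{0,\Phi}$: if $\dimension A_0 \leq 1$ then $A_0 \in \Fc_2 \subset \Tc_{40}$ by Lemma \ref{Max}. If $\dimension A_0 = 2$ and $\dimension (\pi (\supp A_0)) = 1$ then $A_0 \in C_{30} \subset \Tc_{40}$. The remaining case --- and the main obstacle --- is when $\dimension A_0 = 2$ and $\dimension (\pi (\supp A_0)) = 2$; here I want to show $A_0 \in C_{40}$, which amounts to proving $\dimension \wh{A_0} = 3$. The key observation is that Property A3 gives $\dimension (\pi (\supp \wh{A_0})) = 2$, and combined with Property D2 this forces $\dimension \wh{A_0} \in \{2,3\}$. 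If $\dimension \wh{A_0} = 2$ occurred, then $\wh{A_0} \in \Ac_X^{\leq 2} \cap W_{1,\wh{\Phi}}$ (using Lemma \ref{lem:W01transform}), and the $\wh{\Phi}$-analog of Lemma \ref{Daniel} (justified by the symmetry between $\Phi$ and $\wh{\Phi}$ noted in the paper) would give $\wh{A_0} \in \Ac (\pi)_{\leq 1}$, contradicting $\dimension (\pi (\supp \wh{A_0})) = 2$. Hence $\dimension \wh{A_0} = 3$ and $A_0 \in C_{40} \subset \Tc_{40}$, completing the argument.
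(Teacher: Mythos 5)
Your proof is correct and follows the same overall strategy as the paper's: split $E \in \Ac_X^{\leq 2}$ using the torsion pair $(W_{0,\Phi}, W_{1,\Phi})$, reduce to the case where both pieces are $2$-dimensional via Lemma \ref{lem:T20isAleq1}, and then handle the $\Phi$-WIT$_1$ piece exactly as the paper does (Lemma \ref{Daniel} to force it into $\Ac^2(\pi)_1$, then Lemma \ref{Erik} or the definition of $C_{32}$ according to the dimension of its transform). Where you genuinely diverge is in the treatment of the $\Phi$-WIT$_0$ piece $A_0$. The paper's proof asserts that Property D2 gives $\dimension(\pi(\supp E_0))=1$ and places $E_0$ in $C_{30}$; but D2 only yields $\dimension(\pi(\supp E_0)) \in \{1,2\}$ when $\dimension E_0 = 2$, and the case $\dimension(\pi(\supp E_0))=2$ --- which is exactly the case that forces $C_{40}$ to appear among the generators of $\Tc_{40}$ --- is not addressed there. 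You treat this case explicitly, showing $\dimension \wh{A_0}=3$ by ruling out $\dimension \wh{A_0}=2$ with the $\wh{\Phi}$-analogue of Lemma \ref{Daniel} combined with Property A3; this is legitimate given the symmetry between $\Phi$ and $\wh{\Phi}$ that the paper invokes elsewhere (alternatively one can argue directly from Properties CH1p, CH2p, CH3p and A4p: $\alpha_{10}\neq 0$ is the rank of $\wh{A_0}$, forcing $\codimension \wh{A_0}=0$). So your proposal is not merely correct --- it completes a case analysis that the paper's own proof leaves incomplete.
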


\begin{proof}
That $\mathcal{T}_{40} \subseteq \Ac^{\leq 2}_X$ is clear.  To show the other inclusion,  take any $E \in \Ac^{\leq 1}_X$.  Since $\Ac^{\leq 2}_X$ is a Serre subcategory of $\Ac_X$ by Lemma \ref{lem:C00isTC}, Property TC3 implies that there is an $\Ac_X^{\leq 2}$-short exact sequence
\[
0 \to E_0 \to E \to E_1 \to 0
\]
where $E_i \in W_{i,\Phi}$.  Given Lemma \ref{lem:T20isAleq1}, it suffices to assume $\dimension E_0 = \dimension E_1 = 2$.

Lemma \ref{Daniel} and Property D2 together now give $\dimension (\pi (\supp E_1))=1$, and so $E_1 \in \mathcal{A}^2(\pi)_1 \cap W_{1,\Phi}$.  By Properties A3 and D2 again, we know  $\dim(\wh{E_1})$ is either 1 or 2.  If $\dimension (\wh{E_1})=1$, then Lemma \ref{Erik} says $E_1 \in \Tc_{31} \subset \Tc_{40}$; if $\dimension (\wh{E_1})=2$, then $E_1 \in \Tc_{32} \subset \Tc_{40}$.  

On the other hand, Property D2 implies $\dimension (\pi (\supp E_0))=1$, and so  $E_0 \in \Ac^2(\pi)_1 \cap W_{0,\Phi} = C_{30} \subset \Tc_{40}$.  Hence $E \in \Tc_{40}$ as wanted.
\end{proof}

\begin{lem}\label{Jam} 
$\mathcal{A}^{\leq 2}_X \cap W_{0,\Phi} =\mathcal{F}_4$, and it is a torsion class in $\Ac_X$.
\end{lem}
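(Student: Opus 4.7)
The plan is to first dispatch the torsion-class assertion and then establish the set equality by a case analysis on dimension. For the torsion-class claim, I would note that $\Ac^{\leq 2}_X$ is a torsion class in $\Ac_X$ by Property TC1 together with Lemma \ref{lem:Pol}, and $W_{0,\Phi}$ is a torsion class by Property TC3; since the intersection of two torsion classes (both closed under quotients and extensions in a noetherian abelian category) is again a torsion class, $\Ac^{\leq 2}_X \cap W_{0,\Phi}$ is a torsion class in $\Ac_X$.

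For the inclusion $\Fc_4 \subseteq \Ac^{\leq 2}_X \cap W_{0,\Phi}$, I would go through the generators one at a time. The categories $C_{00}, C_{10}, C_{20}$ are contained in $\Ac^{\leq 1}_X \cap W_{0,\Phi}$ by their definitions together with Properties C0 and C1 (this is essentially Lemma \ref{Max}), while $C_{30}$ and $C_{40}$ are contained in $\Ac^2(\pi)_\ast \cap W_{0,\Phi}$ by their very definitions. Extension-closure then yields $\Fc_4 \subseteq \Ac^{\leq 2}_X \cap W_{0,\Phi}$.

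The main content is the reverse inclusion. Given $E \in \Ac^{\leq 2}_X \cap W_{0,\Phi}$, I would split according to $\dim E$. If $\dim E \leq 1$, then $E \in \Ac^{\leq 1}_X \cap W_{0,\Phi} = \Fc_2 \subset \Fc_4$ by Lemma \ref{Max}. If $\dim E = 2$, Property D2 gives $\dim(\pi(\supp E)) \geq 1$. When $\dim(\pi(\supp E)) = 1$, one has $E \in \Ac^2(\pi)_1 \cap W_{0,\Phi} = C_{30} \subset \Fc_4$ directly. When $\dim(\pi(\supp E)) = 2$, I would identify $E$ with an element of $C_{40}$ by showing $\dim \wh{E} = 3$; this is the step I expect to be the main obstacle.

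To handle that final case, I would use Property A3 to deduce $\dim(\pi(\supp \wh{E})) = \dim(\pi(\supp E)) = 2$, which forces $\dim \wh{E} \geq 2$ by Property D2. Since $\wh{E}$ is a coherent sheaf on the threefold $X$, $\dim \wh{E} \in \{2,3\}$. If $\dim \wh{E} = 2$, then $\wh{E}$ lies in $\Ac^{\leq 2}_X \cap W_{1,\wh{\Phi}}$ (using Lemma \ref{lem:W01transform}), and by the $\wh{\Phi}$-analog of Lemma \ref{Daniel} (valid by the symmetry between $\Phi$ and $\wh{\Phi}$ remarked on earlier), one obtains $\wh{E} \in \Ac(\pi)_{\leq 1}$, contradicting $\dim(\pi(\supp \wh{E})) = 2$. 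Therefore $\dim \wh{E} = 3$, giving $E \in C_{40} \subset \Fc_4$ and completing the inclusion $\Ac^{\leq 2}_X \cap W_{0,\Phi} \subseteq \Fc_4$.
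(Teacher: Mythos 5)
Your proof is correct and follows essentially the same route as the paper: both reduce to the case $\dim E = 2$, $\dim(\pi(\supp E)) = 2$ and use the $\wh{\Phi}$-analogue of Lemma \ref{Daniel} together with Property A3 to rule out $\dim \wh{E} = 2$, thereby forcing $E \in C_{40}$. The only difference is that the paper first splits off the $\Fc_3$-torsion part of $E$ and runs the dimension case analysis on the resulting quotient, a preliminary step your direct argument shows to be dispensable since each case already places the sheaf itself in $\Fc_2$, $C_{30}$, or $C_{40}$.
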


\begin{proof}
Since $\Ac^{\leq 2}_X$ and $W_{0,\Phi}$ are both torsion classes in $\Ac_X$ by Lemma \ref{lem:C00isTC} and Property TC3, their intersection is also a torsion class in $\Ac_X$.  Hence it remains to show the equality $\mathcal{A}^{\leq 2}_X \cap W_{0,\Phi} =\mathcal{F}_4$.  The inclusion $\mathcal{F}_4 \subseteq \mathcal{A}^{\leq 2}_X \cap W_{0,\Phi}$ is clear from the definition of $\Fc_4$ and Properties C0 and C2.

To show the inclusion $\mathcal{A}^{\leq 2}_X \cap W_{0,\Phi} \subseteq \Fc_4$, take any $E \in \mathcal{A}^{\leq 2}_X \cap W_{0,\Phi}$.  Note that $\Fc_3$ is  a torsion class in $\Ac_X$ by Lemma \ref{Jamie}; let $\Fc$ denote the corresponding torsion-free class in $\Ac^{\leq 2}_X$. Then we have an $\Ac^{\leq 2}_X$-short exact sequence
\[
0 \to E' \to E \to E'' \to 0
\]
where $E' \in \Fc_3 \subset \Fc_4$ and $E'' \in \Fc$.  Since $W_{0, \Phi}$ is a torsion class in $\Ac_X$ (Property TC3), we have $E'' \in \Ac_X^{\leq 2}\cap W_{0,\Phi}$.  We will be done once we show $E'' \in \Fc_4$.

If $\dimension E'' \leq 1$, then $E'' \in \Fc_2 \subset \Fc_4$ by Lemma \ref{Max}, so we can assume $\dimension E'' = 2$.  Then Property D2 implies $\dimension (\pi (\supp E''))$ is 1 or 2.  If $\dimension (\pi (\supp E''))=1$, then $E'' \in C_{30} \subset \Fc_4$, so we can further assume $\dimension (\pi (\supp E''))=2$.  

Properties A3 and D2 together now imply $\dimension \wh{E''}$ is either 2 or 3.  If $\dimension \wh{E''}=2$, then Lemma \ref{Daniel} gives $\dimension (\pi (\supp E'')) \leq 1$, a contradiction.  If $\dimension \wh{E''}=3$, then $E'' \in C_{40} \subset \Fc_4$, completing the proof.
\end{proof}

\begin{lem}\label{Veronica} 
$W_{0,\Phi} =\mathcal{F}_5$.
\end{lem}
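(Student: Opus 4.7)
The plan is to prove the two inclusions $\Fc_5 \subseteq W_{0,\Phi}$ and $W_{0,\Phi} \subseteq \Fc_5$ separately. The first inclusion should be essentially immediate: each of the generating subcategories $C_{00}, C_{10}, C_{20}, C_{30}, C_{40}, C_{50}$ of $\Fc_5 = \langle C_{00}, C_{10}, C_{20}, C_{30}, C_{40}, C_{50}\rangle$ lies in $W_{0,\Phi}$ by Property C0, the definition of $C_{10}$, Property C1, and the defining conditions of $C_{30}, C_{40}, C_{50}$ respectively; since $W_{0,\Phi}$ is extension-closed (being a torsion class by Property TC3), the first inclusion follows.

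For the reverse inclusion, the plan is to use Lemma \ref{Jam}, which identifies $\Fc_4 = \Ac^{\leq 2}_X \cap W_{0,\Phi}$ and asserts that this is a torsion class in $\Ac_X$. Given $E \in W_{0,\Phi}$, I would use the torsion pair $(\Fc_4, \Fc_4^{\circ})$ to produce an $\Ac_X$-short exact sequence
\[
0 \to E' \to E \to E'' \to 0
\]
with $E' \in \Fc_4 \subset \Fc_5$ and $\Hom(\Fc_4, E'') = 0$. Since $W_{0,\Phi}$ is a torsion class, the quotient $E''$ remains $\Phi$-WIT$_0$. The goal is then to show that $E''$ lies in $C_{50}$, which would give $E \in \langle \Fc_4, C_{50}\rangle \subseteq \Fc_5$.

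The key step is to verify that $E''$ meets the two numerical conditions defining $C_{50} = \Ac^3(\pi)_2 \cap W_{0,\Phi}$. First, if $\dimension E'' \leq 2$ then $E'' \in \Ac^{\leq 2}_X \cap W_{0,\Phi} = \Fc_4$, and the vanishing $\Hom(\Fc_4, E'') = 0$ applied to the identity of $E''$ would force $E'' = 0$; so either $E''$ is zero (in which case $E = E' \in \Fc_4 \subset \Fc_5$ and we are done) or $\dimension E'' = 3$. Second, assuming $\dimension E'' = 3$, I would invoke the flat case of Property D2: because $\pi$ is flat of relative dimension $1$, we get $\dimension E'' - \dimension (\pi(\supp E'')) \leq 1$, so $\dimension (\pi(\supp E'')) \geq 2$, and since $B$ is a surface the dimension equals $2$. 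Combined with $E'' \in W_{0,\Phi}$, this places $E''$ in $C_{50}$.

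I do not expect any serious obstacle here: the hardest moment is the dimension dichotomy for $E''$, but it is resolved immediately once we exploit $\Hom(\Fc_4, E'') = 0$ to kill the $\dimension \leq 2$ case, and then the flatness half of Property D2 handles the fiber-image dimension automatically. In effect this lemma plays the role, one dimension up, that Lemma \ref{Jam} played for $\Ac^{\leq 2}_X \cap W_{0,\Phi}$, with $C_{50}$ absorbing the remaining top-dimensional piece.
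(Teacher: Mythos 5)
Your proof is correct and follows essentially the same route as the paper's: both inclusions are handled identically, using Lemma \ref{Jam} to split $E \in W_{0,\Phi}$ against the torsion class $\Fc_4$ and then forcing the torsion-free part $E''$ to be zero or to lie in $C_{50}$. The only difference is in one citation: where you deduce $\dimension (\pi(\supp E''))=2$ from the flatness clause of Property D2, the paper invokes Property CH2p --- your choice is slightly more elementary (and is the route the paper itself takes at the analogous step of Lemma \ref{lem:last}), though it does not make the lemma hold beyond the product threefold, since Lemma \ref{Jam} already depends on the CH-properties via Lemma \ref{Daniel}.
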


\begin{proof}
By Lemma \ref{Jam}, we have $\Fc_4 \subseteq W_{0,\Phi}$; since $C_{50} \subset W_{0,\Phi}$ by definition, we have $\Fc_5 \subseteq W_{0,\Phi}$.  To show the other inclusion, take any $E \in W_{0,\Phi}$.    Since $\Fc_4$ is a torsion class in $\Ac_X$ (Lemma \ref{Jam}), we have an $\Ac_X$-short exact sequence
\[
0 \to E' \to E \to E'' \to 0
\]
where $E' \in \Fc_4 \subset \Fc_5$ and $\Hom (\Fc_4, E'')=0$.  Since $W_{0,\Phi}$ is a torsion class in $\Ac_X$, we have $E'' \in W_{0,\Phi}$.  If $\dimension E'' \leq 2$, then $E'' \in \Fc_4$ by Lemma \ref{Jam} again, and so $E''=0$; if $\dimension E'' = 3$, then Property CH2p implies $\dimension (\pi (\supp E''))=2$, i.e.\ $E'' \in C_{50} \subset \Fc_5$.  Overall, we have $E \in \Fc_5$.
\end{proof}

\begin{lem}\label{Joshua}
$\mathcal{T}_{50}$ is a torsion class in $\Ac_X$.
\end{lem}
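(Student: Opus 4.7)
The plan is to apply Lemma \ref{Bill} with $\mathcal{T} = \mathcal{T}_{40}$ and $\mathcal{C} = C_{50}$.  Since $\mathcal{T}_{40} = \Ac_X^{\leq 2}$ is a torsion class (Lemmas \ref{lem:C00isTC} and \ref{Esteban}) and $\mathcal{T}_{50} = \langle \mathcal{T}_{40}, C_{50}\rangle$ by definition, it suffices to fix $E \in C_{50}$ and an $\Ac_X$-surjection $E \twoheadrightarrow E'$ and to show $E' \in \mathcal{T}_{50}$.

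First, by Property TC3, $W_{0,\Phi}$ is closed under quotients in $\Ac_X$, so $E' \in W_{0,\Phi}$.  By Property D1 we have $\dimension E' \leq \dimension E = 3$, so we split into two cases according to $\dimension E'$.

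If $\dimension E' \leq 2$, then $E' \in \Ac_X^{\leq 2} \cap W_{0,\Phi} = \Fc_4$ by Lemma \ref{Jam}, and in particular $E' \in \Fc_4 \subseteq \Tc_{40} \subseteq \Tc_{50}$.  If $\dimension E' = 3$, then since $\pi$ is flat of relative dimension $1$, the second assertion of Property D2 gives $\dimension(\pi(\supp E')) \geq \dimension E' - 1 = 2$; on the other hand $\pi(\supp E') \subseteq B$ has dimension at most $2$, so $\dimension(\pi(\supp E')) = 2$.  Combined with $E' \in W_{0,\Phi}$, this yields $E' \in \Ac^3(\pi)_2 \cap W_{0,\Phi} = C_{50} \subseteq \Tc_{50}$.

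In either case $E' \in \Tc_{50}$, so Lemma \ref{Bill} applies and $\mathcal{T}_{50}$ is a torsion class in $\Ac_X$.  There is no real obstacle here; the only subtlety is remembering to invoke the flat relative-dimension-$1$ half of Property D2 to force $\dimension(\pi(\supp E')) = 2$ in the top-dimensional case, which is what places the quotient back into $C_{50}$ rather than merely in $\Fc_5$.
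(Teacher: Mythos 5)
Your proof is correct, and it follows the paper's overall skeleton (reduce via Lemma \ref{Bill} with $\Tc = \Tc_{40}$, $\Cc = C_{50}$ to showing that quotients of objects of $C_{50}$ land in $\Tc_{50}$), but it closes the argument differently. The paper finishes in one line: since $C_{50} \subset \Fc_5$ and $\Fc_5 = W_{0,\Phi}$ is a torsion class (Lemma \ref{Veronica} and Property TC3), any quotient $E'$ of $E \in C_{50}$ already lies in $\Fc_5 \subset \Tc_{50}$, with no case analysis on $\dimension E'$ needed. You instead split on $\dimension E'$, using Lemma \ref{Jam} in the low-dimensional case and, in the case $\dimension E' = 3$, the flat-relative-dimension-$1$ clause of Property D2 together with $\dimension B = 2$ to force $\dimension(\pi(\supp E')) = 2$ and hence $E' \in C_{50}$. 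Both are valid; the paper's route is shorter because Lemma \ref{Veronica} has already packaged the needed dimension bookkeeping (there it is Property CH2p that pins down $\dimension(\pi(\supp(-))) = 2$ for $3$-dimensional sheaves, whereas you get the same conclusion from the flatness half of D2 --- the same device the paper itself uses later in Lemma \ref{lem:last}). Your version costs a little more writing but makes the structure of the quotient explicit and does not need Lemma \ref{Veronica} at all, only Lemmas \ref{Esteban} and \ref{Jam}.
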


\begin{proof}
We have $\mathcal{T}_{50}= \langle \mathcal{T}_{40}, C_{50}\rangle$ by definition.  Take any $E \in C_{50}$ and any $\Ac_X$-surjection $E \twoheadrightarrow E'$.  By Lemmas \ref{Esteban} and \ref{Bill}, it suffices to show $E' \in \Tc_{50}$.  Since $E \in \Fc_5$, Lemma \ref{Veronica} and Property TC3 together give $E' \in\Fc_5 \subset \Tc_{50}$, completing the proof.
\end{proof}

\begin{lem} \label{Lin}
$\mathcal{T}_{51}$ is a torsion class in $\Ac_X$.
\end{lem}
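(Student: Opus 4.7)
The plan is to apply Lemma \ref{Bill} in the same style as the proofs of Lemmas \ref{Gaby} and \ref{Manuel}: since $\Tc_{51} = \langle \Tc_{50}, C_{51}\rangle$ by definition and $\Tc_{50}$ is already a torsion class by Lemma \ref{Joshua}, it suffices to take an arbitrary $E \in C_{51}$ together with an arbitrary $\Ac_X$-surjection $E \twoheadrightarrow E'$ and show $E' \in \Tc_{51}$.  Property TC3 then splits $E'$ into a short exact sequence $0 \to A_0 \to E' \to A_1 \to 0$ with $A_i \in W_{i,\Phi}$, and the task reduces to placing each of $A_0, A_1$ in $\Tc_{51}$ via a case analysis on dimension.

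For $A_0 \in W_{0,\Phi}$ the analysis is routine.  If $\dimension A_0 \leq 2$, then Lemma \ref{Jam} gives $A_0 \in \Fc_4 \subset \Tc_{40} \subset \Tc_{51}$; if $\dimension A_0 = 3$, then Property D2 combined with the flatness of $\pi$ of relative dimension one pins $\dimension (\pi (\supp A_0)) = 2$, so $A_0 \in \Ac^3(\pi)_2 \cap W_{0,\Phi} = C_{50} \subset \Tc_{50} \subset \Tc_{51}$.

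For $A_1 \in W_{1,\Phi}$ I would split on $\dimension A_1$.  When $\dimension A_1 \leq 1$, Lemma \ref{lem:T20isAleq1} gives $A_1 \in \Ac^{\leq 1}_X = \Tc_{20} \subset \Tc_{51}$.  When $\dimension A_1 = 2$, Lemma \ref{Daniel} forces $A_1 \in \Ac(\pi)_{\leq 1}$, and using D2 to secure the lower bound on $\dimension (\pi (\supp A_1))$ places $A_1$ in $\Ac^2(\pi)_1 \cap W_{1,\Phi}$; Properties A3 and D2 then restrict $\dimension \wh{A_1}$ to $1$ or $2$, landing $A_1$ in $\Tc_{31} \subset \Tc_{51}$ via Lemma \ref{Erik} in the first case, and in $C_{32} \subset \Tc_{32} \subset \Tc_{40} \subset \Tc_{51}$ in the second.

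The genuine obstacle is the case $\dimension A_1 = 3$, since a priori Properties A3 and D2 leave $\dimension \wh{A_1} \in \{2,3\}$, and the value $3$ would place $A_1$ in $C_{52}$, which is \emph{not} contained in $\Tc_{51}$.  To rule this out I would invoke Lemma \ref{Gabriel} applied to the two short exact sequences $0 \to K \to E \to E' \to 0$ and $0 \to A_0 \to E' \to A_1 \to 0$; since $E \in C_{51} \subset W_{1,\Phi}$, the lemma produces an $\Ac_X$-surjection $\wh{E} \twoheadrightarrow \wh{A_1}$, and because $\dimension \wh{E} = 2$ by the definition of $C_{51}$, Property D1 forces $\dimension \wh{A_1} \leq 2$.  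Combined with the lower bound $\dimension \wh{A_1} \geq \dimension (\pi (\supp \wh{A_1})) = 2$ coming from Property A3, this pins $\dimension \wh{A_1} = 2$ and places $A_1 \in C_{51} \subset \Tc_{51}$, closing the argument.
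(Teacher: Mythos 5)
Your proposal is correct and follows essentially the same route as the paper's proof: reduce via Lemma \ref{Bill} and Lemma \ref{Joshua} to a quotient $E'$ of $E \in C_{51}$, decompose by Property TC3, and use Lemma \ref{Gabriel} together with $\dimension \wh{E}=2$ to pin $\dimension \wh{A_1}=2$ in the critical three-dimensional case. The only difference is cosmetic: the paper dispatches $A_0$ in one line via $W_{0,\Phi}=\Fc_5$ (Lemma \ref{Veronica}) and the case $\dimension A_1 \leq 2$ via $\Ac_X^{\leq 2}=\Tc_{40}$ (Lemma \ref{Esteban}), whereas you re-derive these subcases by hand.
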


\begin{proof}
We have $\mathcal{T}_{51}= \langle \mathcal{T}_{50}, C_{51}\rangle$ by definition.  Take any $E \in C_{51}$ and any $\Ac_X$-surjection $E \twoheadrightarrow E'$.  By Lemmas \ref{Joshua} and \ref{Bill}, it suffices to show $E' \in \Tc_{51}$.  By Property TC3, we have an $\Ac_X$-short exact sequence
\[
0 \to A_0 \to E' \to A_1 \to 0
\]
where $A_i \in W_{i,\Phi}$.  Then $A_0 \in \Fc_5 \subset \Tc_{51}$ by Lemma \ref{Veronica}, so it remains to show $A_1 \in \Tc_{51}$.

If $\dimension A_1 \leq 2$, then  $A_1 \in \Tc_{40} \subset \Tc_{51}$ by Lemma \ref{Esteban}; so let us assume $\dimension A_1 = 3$ from now on.  Since $E \in C_{51}$, we have $E \in W_{1,\Phi}$.  Thus by Lemma \ref{Gabriel}, there is an $\Ac_X$-surjection $\wh{E} \twoheadrightarrow \wh{A_1}$.   By definition of $C_{51}$, we have $\dimension \wh{E}=2$, and so $\dimension \wh{A_1} \leq 2$ by Property D1.  

Suppose $\dimension \wh{A_1} \leq 1$; then $\dimension (\pi (\supp \wh{A_1})) \leq 1$ by Property D2, and so $\dimension (\pi (\supp A_1)) = \dimension (\pi (\supp \wh{A_1})) \leq 1$ by Property A3.  By Property D2 again, however, we obtain $\dimension A_1 \leq 2$, a contradiction.  Thus we must have $\dimension \wh{A_1}=2$, meaning $A_1 \in C_{51} \subset \Tc_{51}$, and we are done.
\end{proof}

\begin{lem}\label{lem:last}
$\Ac_X = \mathcal{T}_{52}$.
\end{lem}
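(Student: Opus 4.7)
The plan is to proceed in the exact same spirit as the preceding lemmas (notably Lemma \ref{Esteban} and Lemma \ref{Lin}): apply the canonical torsion pair $(W_{0,\Phi}, W_{1,\Phi})$ from Property TC3 to an arbitrary $E\in \Ac_X$ to get an $\Ac_X$-short exact sequence
\[
0 \to E_0 \to E \to E_1 \to 0
\]
with $E_i \in W_{i,\Phi}$, and then show each piece lies in $\Tc_{52}$. Since $\Tc_{52}$ is extension-closed by construction, this will give $E\in \Tc_{52}$, and the reverse inclusion $\Tc_{52}\subseteq \Ac_X$ is trivial.

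For $E_0$, I would use Lemma \ref{Veronica}, which gives $W_{0,\Phi}=\Fc_5$; since $\Fc_5=\langle C_{00},C_{10},\ldots,C_{50}\rangle\subseteq \Tc_{50}\subseteq \Tc_{52}$, we get $E_0\in \Tc_{52}$ for free. For $E_1$, the dichotomy comes from its dimension. If $\dimension E_1\leq 2$, then Lemma \ref{Esteban} gives $E_1\in \Ac_X^{\leq 2}=\Tc_{40}\subseteq \Tc_{52}$, and we are done. So the only substantive case is $\dimension E_1=3$.

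In that remaining case, I would pin down where $E_1$ sits by dimension bookkeeping using Properties D2 and A3. Because $\pi$ is flat of relative dimension $1$, Property D2 forces $\dimension(\pi(\supp E_1))\geq 2$, and since $B$ is a surface we get equality, i.e.\ $E_1\in \Ac^3(\pi)_2\cap W_{1,\Phi}$. Applying Property A3 to $E_1$ gives $\dimension(\pi(\supp \wh{E_1}))=2$, and Property D2 applied to $\wh{E_1}$ then forces $\dimension \wh{E_1}\in\{2,3\}$. These are exactly the two cases appearing in the defining conditions of $C_{51}$ and $C_{52}$: if $\dimension \wh{E_1}=2$ then $E_1\in C_{51}\subseteq \Tc_{51}\subseteq \Tc_{52}$, and if $\dimension \wh{E_1}=3$ then $E_1\in C_{52}\subseteq \Tc_{52}$.

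I do not expect any serious obstacle here, since the categories $C_{51}$ and $C_{52}$ were set up precisely to absorb the two possible values of $\dimension \wh{E_1}$ in the $\dimension E_1=3$ case, and the $W_{0,\Phi}$-side is already handled globally by Lemma \ref{Veronica}. The only small sanity check worth doing is confirming that $\Fc_5\subseteq \Tc_{50}$ (immediate from Definition \ref{Jazz} since $\Tc_{40}$ contains all $C_{ij}$ with $i\leq 4$ and $\Tc_{50}$ then contains $C_{50}$); after that, the proof is just the bookkeeping of short exact sequences and containment of subcategories that has already become routine by this point of the section.
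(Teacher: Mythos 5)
Your proposal is correct and follows essentially the same route as the paper's own proof: decompose $E$ via the torsion pair $(W_{0,\Phi},W_{1,\Phi})$, absorb $E_0$ into $\Fc_5=W_{0,\Phi}$ via Lemma \ref{Veronica}, handle $\dimension E_1\leq 2$ via Lemma \ref{Esteban}, and in the $\dimension E_1=3$ case use Properties D2 and A3 to place $E_1$ in $C_{51}$ or $C_{52}$ according to $\dimension\wh{E_1}$. (Your citation of Property A3 for the step $\dimension\wh{E_1}\geq 2$ is in fact the right one; the paper's text cites A4 there, which appears to be a slip.)
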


\begin{proof}
Take any $E \in \Ac_X$. Property TC3 asserts there is an $\Ac_X$-short  exact sequence 
\[
 0\to E_0 \to E \to E_1 \to 0
 \]
where $E_i \in W_{i,\Phi}$.  Note that $E_0 \in \Fc_5 \subset \Tc_{52}$ by Lemma \ref{Veronica}, so it remains to show $E_1 \in \Tc_{52}$.  

If $\dimension E_1 \leq 2$, then $E_1 \in \Tc_{40}\subset \Tc_{52}$ by Lemma \ref{Esteban}, so let us assume $\dimension E_1 = 3$.  Then $\dimension (\pi (\supp E_1))=2$  by Property D2 and the fact that the dimension of the base $B$ of the elliptic fibration $\pi : X \to B$ is 2.  Properties A4 and D2 together then gives $\dimension \wh{E_1} \geq 2$.  Then $E_1 \in C_{51} \subset \Tc_{52}$ if $\dimension \wh{E_1}=2$, while $E_1 \in C_{52} \subset \Tc_{52}$ if $\dimension \wh{E_1}=3$. This completes the proof of the lemma.
\end{proof}

\begin{proof}[Proof of Theorem \ref{thm:main}]
This follows from Lemmas \ref{lem:C00isTC}, \ref{lem:C00C20isTC}, \ref{Jade}, \ref{Lucas}, \ref{Sarah}, \ref{Max}, \ref{Tyler}, \ref{Jamie}, \ref{Zuly}, \ref{Gaby}, \ref{Manuel}, \ref{Esteban}, \ref{Jam}, \ref{Veronica}, \ref{Joshua} and \ref{Lin}.
\end{proof}

\bibliography{refs}{}
\bibliographystyle{plain}

\end{document}